\newtheorem{theorem}{Theorem}[section]
\newtheorem{lemma}[theorem]{Lemma}
\newtheorem{proposition}[theorem]{Proposition}
\newtheorem{definition-proposition}[theorem]{Definition-Proposition}
\newtheorem{problem}[theorem]{Problem}
\theoremstyle{definition}
\newtheorem{definition}[theorem]{Definition}
\newtheorem{example}[theorem]{Example}
\numberwithin{equation}{section}
\newcommand{\CC}{\mathcal{C}}
\newcommand{\OO}{{\mathcal O}}
\newcommand{\UU}{\mathcal{U}}
\renewcommand{\AA}{\mathcal{A}}
\renewcommand{\L}{\mathbb{L}}
\newcommand{\X}{\mathbb{X}}
\newcommand{\Z}{\mathbb{Z}}
\newcommand{\mdots}{\begin{turn}{87}$\ddots$ \end{turn}}
\renewcommand{\c}{\vec c}
\newcommand{\vdelta}{\vec{\delta}}
\newcommand{\y}{\vec y}
\newcommand{\x}{\vec x}
\newcommand{\vell}{\vec {\ell}}
\newcommand{\z}{\vec z}
\newcommand{\s}{\vec s}
\newcommand{\w}{\vec{\omega}}
\newcommand{\cut}{\ar@{-}@[|(5)]}
\newcommand{\Hom}{\operatorname{Hom}\nolimits}
\newcommand{\End}{\operatorname{End}\nolimits}
\newcommand{\Ext}{\operatorname{Ext}\nolimits}
\newcommand{\bo}{\operatorname{b}\nolimits}
\newcommand{\RHom}{\mathbf{R}\strut\kern-.2em\operatorname{Hom}\nolimits}
\DeclareMathOperator{\add}{\mathsf{add}}
\DeclareMathOperator{\thick}{\mathsf{thick}}
\DeclareMathOperator{\CM}{\mathsf{CM}}
\DeclareMathOperator{\moduleCategory}{\mathsf{mod}} \renewcommand{\mod}{\moduleCategory}
\DeclareMathOperator{\coh}{\mathsf{coh}}
\newcommand{\DDD}{\mathsf{D}}
\newcommand{\KKK}{\mathsf{K}}
\DeclareMathOperator{\vect}{\mathsf{vect}}
\DeclareMathOperator{\proj}{\mathsf{proj}}
\tikzset{every picture/.style={line width=0.75pt}} 
\begin{document}

	\title[An open question for $d$-tilting bundles on GL projective spaces]{An open question for $d$-tilting bundles on Geigle-Lenzing projective spaces}

\author[J. Chen and W. Weng] {Jianmin Chen and Weikang Weng$^*$}

\thanks{$^*$ the corresponding author}
\makeatletter \@namedef{subjclassname@2020}{\textup{2020} Mathematics Subject Classification} \makeatother

	\subjclass[2020]{14F08, 16G10, 16G50, 18G80}
	
	\keywords{Geigle-Lenzing projective space, $d$-tilting object, Cohen-Macaulary module, $d$-representation infinite algebra}
	
	\begin{abstract} We construct a family of $2$-tilting bundles on a Geigle-Lenzing projective space of type $(2,2,p,q)$ via the action of iterated $2$-APR mutations. As an application, we give some non-examples for an open question raised by  Herschend, Iyama, Minamoto and Oppermann in the paper ``Representation theory of Geigle-Lenzing complete intersections".

	\end{abstract}
	
	\maketitle
	\section{Introduction}

	Let $k$ be an algebraically closed  field and  integers $d,n\ge 1$.  Following \cite{HIMO}, the \emph{Geigle-Lenzing}  (\emph{GL}) \emph{complete intersection} is  the $k$-algebra
	$$R:=k[T_0, \ldots, T_d,X_1,\dots,X_n] / (X_i^{p_i} -\ell_i \mid 1\le i \le n),$$
	where $\ell_1, \dots, \ell_n$ are linear forms in $k[T_0, \ldots, T_d]$ as linear independent as possible, and $p_1,\dots,p_n\ge 2$ are  integers  called \emph{weights}. It is graded by an abelian group  $$\L:= \langle \x_1,\ldots,\x_n, \c \rangle / \langle p_i \x_i - \c \mid 1\le i \le n\rangle.$$  	
	The category $\coh \X$ of coherent sheaves on  \emph{Geigle-Lenzing} (\emph{GL}) \emph{projective space} $\X$ of dimension $d$ is defined by applying Serre's construction \cite{S1} to the $\L$-graded $k$-algebra $R$. The GL projective spaces are higher dimensional analogs of weighted projective lines in the sense of Geigle-Lenzing \cite{GL}. They link various subjects, including  representation theory, singularity theory, homological mirror symmetry and (non-commutative) algebraic geometry, see e.g. \cite{CRW, FU,HIMO,KST1, KST2, KLM}.
	
	Denote by $\CM^{\L}R$ the category  of $\L$-graded Cohen-Macaulay  modules, by $\underline{\CM}^{\L}R$ its associated stable category, and by $\DDD^{\bo}(\coh \X)$ the bounded derived category of $\coh \X$.
	 In \cite{HIMO},  Herschend, Iyama, Minamoto and Oppermann studied when $\underline{\CM}^{\L}R$ 
	 or $\DDD^{\bo}(\coh \X)$ has a $d$-tilting object.  
	 This property holds significance within the framework of higher Auslander-Reiten theory \cite{HIO, I1}. In fact,  the endomorphism algebra of a $d$-tilting bundle  on $\X$ is a $d$-representation infinite algebra in sense of \cite{HIO}. They also investigated that under certain conditions,  a $d$-tilting object in $\underline{\CM}^{\L}R$ can lift to a $d$-tilting bundle on $\X$. Naturally, they posed the following problem.
	
	\begin{problem}$($\cite[Problem 7.39]{HIMO}$)$ \label{problem}
		\label{d-tilting bundle is d-tilting object}
		Let $V\in\CM^{\L}R$ be a $d$-tilting bundle on $\X$. Is $V$ a $d$-tilting object in $\underline{\CM}^{\L}R$?
	\end{problem}
	
	In the case $d=1$ or $n\le d+1$,  this  is true, see \cite{HIMO,KLM}. For the general case, a partial answer to Problem \ref{problem} was given in \cite{HIMO}, see Theorem \ref{partical ans}.

	 Based on the observation that the action of $d$-APR mutations preserves $d$-tilting bundles, see Lemma \ref{mutation again}.  In the case $d=2$ and $n=4$, we construct a family of $2$-tilting bundles on a GL projective space of  dimension $2$, and thus obtain a family of $2$-representation infinite algebras. 
	As an application, we find some  non-examples of the Problem \ref{problem}. Furthermore, we also show that the converse of Theorem \ref{partical ans} is not true in general, see Example \ref{example 2}.

	 Let $\s:=\sum_{i=1}^{4}\x_i$, $\w:=\c-\s$ and $\vdelta :=2\c+2\w$. For any element $\vell=\sum_{i=1}^{4}\ell_i\x_i$ in the internal $[\s,\s+\vdelta]$, let	$U^{\vell}:=\rho(R/(X_i^{\ell_i}\mid 1\le i\le 4)),$ where $\rho$ denotes the composition $\DDD^{\bo}(\mod^{\L}R)\to\DDD_{\rm sg}^{\L}(R)\xrightarrow{\sim} \underline{\CM}^{\L}R$ and the set $\mathcal{S}$  consists of  elements $\x=\sum_{i=1}^{4}\lambda_i\x_i+\lambda\c$ (in normal form) in $\L$ 
	satisfying 
	\begin{align} \label{S=}
		0 < 2 \lambda + \# \{1\le i \le 4\mid\lambda_i\ge1\} \le \#\{3\le i \le 4\mid\lambda_i \in \{0,1\}\}.
	\end{align}

	Our main theorem in this paper is the following.
	 	\begin{theorem} \label{main theorem} {\rm (See Theorem~\ref{main theorem_1}, Propositions~\ref{pro1}, \ref{pro2} for details)} 
		 		Let $d=2$, $n=4$ and $(R,\L)$ be a GL complete intersection of  type $(2,2,p,q)$ with integers $p,q \ge 2$, and $\X$ the corresponding GL projective space. Let $J$ be an  upset restricted to $[\s,\s+\vdelta]$ and $I:=[\s,\s+\vdelta]\setminus J$.
		 		We put
		 		\[M:=(\bigoplus_{\vell\in I} U^{\vell} ) \oplus  (\bigoplus_{\z\in J} U^{\z}(-\w) ) \oplus(\bigoplus_{\x\in \mathcal{S}}R(\x)).\]
		 		Then  $M$ gives a $2$-tilting bundle on $\X$. Moreover, we have the following.
		 		\begin{itemize}
			 			\item [(a)] If $p=2$, then $M$ is	not a $2$-tilting object  in $\underline{\CM}^{\L} R$ if and only if $J=[\s+m\x_4,\s+(q-2)\x_4]$ for some $1\le m \le q-2$.		
			 			\item [(b)] If $p=q=3$, then $M$ is	not a $2$-tilting object  in $\underline{\CM}^{\L} R$ if and only if $J=[\s+\x_i,\s+\x_3+\x_4]$ for $i=3,4$.
			 \end{itemize}
		 \end{theorem}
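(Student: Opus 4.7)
The plan is to prove $M$ is a $2$-tilting bundle on $\X$ by induction on $\#J$, using iterated $2$-APR mutations as the inductive step. For the base case $J=\emptyset$ we have $I=[\s,\s+\vdelta]$, so
\[
M=\bigl(\bigoplus_{\vell\in[\s,\s+\vdelta]}U^{\vell}\bigr)\oplus\bigl(\bigoplus_{\x\in\mathcal{S}}R(\x)\bigr).
\]
I would first identify this with a canonical $2$-tilting bundle for GL projective spaces of type $(2,2,p,q)$ from \cite{HIMO}; the condition~\eqref{S=} defining $\mathcal{S}$ is designed precisely so this decomposition matches. For the inductive step, pick a minimal element $\z\in J$. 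The $2$-APR mutation of $M$ at the summand $U^{\z}$ should replace it by $U^{\z}(-\w)$ and leave the other summands unchanged; by Lemma~\ref{mutation again} this preserves the $2$-tilting property. Iterating the mutations over a linear extension of $J$ then produces exactly the $M$ in the statement.

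The key calculation is to verify that the cone of the minimal left $\add(M/U^{\z})$-approximation of $U^{\z}$ is indeed $U^{\z}(-\w)$. I would obtain this from an explicit $\L$-graded exact sequence built from the defining relations $X_i^{p_i}=\ell_i$ of $R$, which produces a Koszul-type resolution of $U^{\z}=\rho(R/(X_i^{z_i}\mid 1\le i\le 4))$; minimality of $\z$ in the remaining upset, together with the shape of $\mathcal{S}$, ensures the terms of this resolution actually belong to $\add M$ at every stage.

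For parts (a) and (b), I use the criterion that $M$ is a $2$-tilting object in $\underline{\CM}^{\L}R$ precisely when $\rho(M)$ has enough indecomposable direct summands to generate $\underline{\CM}^{\L}R$ as a thick subcategory, since Ext-vanishing on $\X$ descends to $\underline{\CM}^{\L}R$ through the functor $\rho$. Failure of tiltingness is therefore equivalent to the occurrence of isomorphisms $\rho(U^{\vell})\simeq\rho(U^{\z}(-\w))$ collapsing the summand count. In type $(2,2,2,q)$, the weights $p_1=p_2=p_3=2$ force $\rho(U^{\vell})$ to depend effectively only on the $\x_4$-component of $\vell$, so such collapses occur exactly when $J$ is a discrete interval along $\x_4$, yielding~(a); a symmetric analysis in the two equal weights $p_3=p_4=3$ of type $(2,2,3,3)$ singles out the two intervals of~(b).

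The main obstacle I expect is the explicit Koszul computation verifying that the mutation cone is $U^{\z}(-\w)$: this requires careful bookkeeping of the $\L$-grading and the identifications $p_i\x_i=\c$, and in particular a combinatorial proof that the set $\mathcal{S}$ defined by~\eqref{S=} exactly supplies the summands needed at every stage of the induction. For~(a) and~(b), the delicate step is identifying precisely which pairs $(\vell,\z)$ collapse under $\rho$, which I expect reduces to a case analysis using the structure of $\underline{\CM}$ over the local quotients $k[X_i]/(X_i^{p_i})$.
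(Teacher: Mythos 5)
Your strategy for the first assertion (start from the known $2$-tilting bundle $T$ of Proposition~\ref{pre-prosition} and twist the summands indexed by $J$ via mutation) is the same as the paper's, but you have misidentified the step that actually needs proof. In Definition~\ref{def.mutation} the mutation $\mu^+_{V'}(V)$ is \emph{defined} to be $V'(-\w)\oplus V''$; there is no cone of a left approximation to compute, so the Koszul-type calculation you flag as the main obstacle is not needed. What Lemma~\ref{mutation again} does require is the hypothesis $\Hom_{\X}(V'',V')=0$, i.e.\ the vanishing $\Hom(\pi(U^{\vell}),\pi(U^{\z}))=0$ for all $\vell\in I$, $\z\in J$ (this is where the upset condition enters: $\vell\not\ge\z$) and $\Hom(\OO(\x),\pi(U^{\z}))=0$ for all $\x\in\mathcal{S}$. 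These two vanishing statements are the technical heart of the paper (Proposition~\ref{lem2} and the Claim in the proof of Theorem~\ref{main theorem_1}, both proved by applying $\Hom$ to the four-term sequences of Proposition~\ref{2-extension bundle} and analyzing which line-bundle Homs survive), and your proposal never addresses them. Note also that the paper performs a single block mutation at $V'=\bigoplus_{\z\in J}\pi(U^{\z})$ rather than an induction over a linear extension of $J$; your one-at-a-time version could be made to work, but only after the same Hom-vanishing is established.

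For parts (a) and (b) your criterion is incorrect. Rigidity on $\X$ does \emph{not} descend to $\underline{\CM}^{\L}R$: Proposition~\ref{relationship} identifies $\Ext^i$ only for $0\le i\le d-1$, and the whole point of Problem~\ref{problem} is that a $2$-tilting bundle may fail to be a $2$-tilting object stably. The obstruction is never an isomorphism $\rho(U^{\vell})\simeq\rho(U^{\z}(-\w))$ collapsing summands; it is a nonzero negative-degree stable morphism
$$\underline{\Hom}(U^{\vell},U^{\z}(-\w)[-1])\simeq\underline{\Hom}(U^{\vell},\tau^{-1}U^{\z})\ne 0$$
detected by an irreducible map in the Auslander--Reiten quiver. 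In type $(2,2,2,q)$ the interval $[\s,\s+\vdelta]$ is already a chain in the single coordinate $\x_4$, so no two summands become isomorphic, yet $M$ fails to be tilting precisely for the proper nonempty upsets $J=[\s+m\x_4,\s+(q-2)\x_4]$, $1\le m\le q-2$; your collapse criterion cannot see this. Moreover the converse directions (that $M$ \emph{is} tilting for the remaining upsets) require separately checking rigidity, $\thick M=\underline{\CM}^{\L}R$, and $\gl\End(M)\le 2$, as in Steps 1--3 of the proof of Proposition~\ref{pro2}; your proposal supplies none of this.
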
	
			
	Recall from \cite{HIMO} that
	we have the following diagram which shows connections between the relevant notions (here the down arrow is a restriction).

		\[
	\begin{xy}
		(-33,0)*{\left\{\begin{array}{c}\mbox{$d$-tilting bundles on $\X$}\\ \mbox{contained in $\CM^{\L}R$}\end{array}\right\} \ }="A",
		(33,0)*{\ \left\{\begin{array}{c}\mbox{slices in $d$-cluster tilting}\\ \mbox{subcategories of $\CM^{\L}R$}\end{array}\right\}}="B",
		(-33,-20)*{\{\mbox{$d$-tilting objects in $\underline{\CM}^{\L}R$}\}  \ }="C",
		(33,-20)*{\ \left\{\begin{array}{c}\mbox{slices in $d$-cluster tilting}\\ \mbox{subcategories of $\underline{\CM}^{\L}R$}\end{array}\right\}}="D",
		
		\ar@{=} "A";"B"^{{\tiny \begin{array}{c} \mbox{\cite[Theorem 7.23]{HIMO}}   \end{array}}}
		\ar "B";"D"_{{\tiny \begin{array}{c} \mbox{}  \end{array}}}
		\ar@{^{(}->} "C";"D"^{{\tiny \begin{array}{c} \mbox{\cite[Theorem 4.53]{HIMO}}   \end{array}}}
	\end{xy}
	\]
		
	This diagram implies that the bottom inclusion is strict for the non-examples given in Theorem \ref{main theorem}. Furthermore,  the connectedness of $d$-tilting bundles in a $d$-cluster tilting subcategory under $d$-APR mutations is shown in Theorem \ref{connectedness}.

	The paper is organized as follows.  In Section \ref{sec: Preliminaries}, we collect some preliminaries about Geigle-Lenzing projective spaces, $d$-tilting bundles and $d$-APR mutations of them. In Section \ref{sec:main results}, we prove Theorem \ref{main theorem} via the action of iterated $d$-APR mutations. In Section \ref{sec:examples}, we give two  non-examples: one for the Problem \ref{problem}, and the other for the converse of  Theorem \ref{partical ans}.

	\section{Preliminaries} \label{sec: Preliminaries}
	In this section, we recall some basic notions and facts appearing in this paper.  We refer to \cite{HIMO, KLM} on Geigle-Lenzing complete intersections and projective spaces, and to \cite{I1,J1,JK} on higher dimensional Auslander-Reiten theory. Throughout this paper, we fix an algebraically closed  field $k$ and  integers $d,n\ge 1$.

		\subsection{$d$-cluster tilting subcategories} Let us start with recalling the notion of functorially finite subcategory of an additive category. Let $\AA$ be an additive category and $\CC$ a full subcategory of $\AA$.
	We say that $\CC$ is a \emph{contravariantly finite subcategory} of $\AA$ if for every object in $A\in\AA$, there exists a morphism
	$f:C\to A$ with $C \in \CC$ such that the induced morphism
	$$\Hom_{\AA}(C',C)\to\Hom_{\AA}(C',A)$$ is surjective for all $C'\in\CC$. Such a morphism $f$ is called a \emph{right $\CC$-approximation} of $A$. Dually we define  a \emph{left $\CC$-approximation} and a 
	\emph{covariantly finite subcategory}. We say that $\CC$ is \emph{functorially finite} if it is both contravariantly and covariantly finite.

	\begin{definition}(\cite[Definition 1.1]{I3}, \cite[definition 4.13]{J1}) A full subcategory $\CC$  of an exact category or triangulated category $\AA$ is called  \emph{$d$-cluster tilting subcategory} if it is a functorially finite subcategory of $\AA$ such that	
			\begin{align*}
				\CC&=\{X\in\AA\mid \Ext_{\AA}^i(\CC,X)=0 \text{ for any }  1\le i \le d-1\} \\
				&=\{X\in\AA\mid \Ext_{\AA}^i(X,\CC)=0\text{ for any }   1\le i \le d-1 \}
			\end{align*}
			and, in the exact case, $\CC$ is a both generating and cogenerating subcategory of $\AA$, that is, for any object $A\in \AA$, there exists an epimorphism $C \to A$ with $C \in \CC$ and a  monomorphism  $ A \to C'$ with  $C' \in \CC$. 
	\end{definition} 

	\subsection{Geigle-Lenzing complete intersections and projective spaces} Fix an $n$-tuple $(p_1,\dots,p_n)$ of all integers $p_i \ge 2$, called \emph{weights}. Consider the $k$-algebra
	$$R:=k[T_0, \ldots, T_d,X_1,\dots,X_n] / (X_i^{p_i} -\ell_i \mid 1\le i \le n),$$
	where $\ell_1, \dots, \ell_n$ are linear forms in $k[T_0, \ldots, T_d]$ in general position, that is, 
	any set of at most $d+1$ of the linear forms $\ell_i$ is linearly independent. 
	Let $\L$ be the abelian group on generators $\x_1,\ldots,\x_n,\c$ subject to the relations $$p_1 \x_1= \dots=p_n \x_n=:\c.$$ The element $\c$ is called the \emph{canonical element} of $\L$. Each element $\x$ in $\L$ can be uniquely written in a \emph{normal form} as 
	\begin{align}\label{equ:nor}
		\x=\sum_{i=1}^n\lambda_i\x_i+\lambda\c
	\end{align}
	with $ 0\le \lambda_i < p_i$ and $\lambda\in\Z$. We can regard $R$ as an $\L$-graded $k$-algebra by setting $\deg X_i:=\x_i$  and $\deg T_j:=\c$  for any $i$ and $j$, hence $R=\bigoplus_{\x\in \L } R_{\x}$, where $R_{\x}$ denotes the homogeneous component of degree $\vec{x}$. 
	The pair $(R,\L)$ is called \emph{Geigle-Lenzing} (\emph{GL})
	\emph{complete intersection} associated with $\ell_1,\ldots,\ell_n$ and $p_1,\ldots,p_n$. In particular, $R$ is regular if  $n\le d+1$; $R$ is a hypersurface if $d=n+2$.

	Let $\L_+$ be the submonoid of $\L$ generated by $\x_1,\ldots,\x_4,\c$. Then we equip $\L$ with the structure of a partially ordered set: $\x\le \y$ if and only if $\y-\x\in\L_+$. Each element $\x$ of $\L$ satisfies exactly one of the following two possibilities
	\begin{align}\label{two possibilities of x}
		\x\geq 0 \text{\quad or\quad}  \x\leq d\c+\vec{\omega},
	\end{align}
	where  $\w=\c-\sum_{i=1}^n \x_i$ is called the \emph{dualizing element} of $\L$. 
	Note that $R_{\x} \neq 0$ if and only if $\x \geq 0$ if and only if $\ell \geq 0$ in its normal form in (\ref{equ:nor}).  
	The interval in $\L$ is denoted by
	$[\x,\y] := \{\z \in\L \mid \x \le \z \le \y\}.$ Let $I \subset J \subset \L$. We say that $I$ is an \emph{upset} of $\L$ restricted to $J$ if $(I+\L_{+}) \cap J \subset I$ holds.
	
		We denote by $\CM^{\L} R$ the category of $\L$-graded (maximal) Cohen-Macaulay $R$-modules,  
	which is Frobenius, and thus its stable category $\underline{\CM}^{\L}R$ forms a triangulated category by a general result of Happel \cite{Hap1}. By results of Buchweitz \cite{Bu} and Orlov \cite{Orlov:2009}, there exists a triangle equivalence $\DDD^{\L}_{\rm sg}(R)\simeq\underline{\CM}^{\L}R$, where 
	$\DDD^{\L}_{\rm sg}(R):=\DDD^{\bo}(\mod^{\L}R)/\KKK^{\bo}(\proj^{\L}R)$
	is the \emph{singularity category} of $R$.
	The following presents a basic property in Cohen-Macaulay representation theory. Here, we denote by $D$ the $k$-dual, that is  $D(-):=\Hom_{k}(-,k)$.
	\begin{theorem}[\cite{HIMO}] \label{Auslander-Reiten-Serre duality} (Auslander-Reiten-Serre duality) There exists a functorial isomorphism for any $X,Y\in \underline{\CM}^{\L}R$:
		\begin{equation}\label{Auslander-Reiten-Serre duality CM}
			\Hom_{\underline{\CM}^{\L}R}(X,Y)\simeq D\Hom_{\underline{\CM}^{\L}R}(Y,X(\w)[d]).
		\end{equation}	
	\end{theorem}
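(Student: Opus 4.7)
The plan is to realize the claimed isomorphism as the defining property of a Serre functor on the triangulated category $\underline{\CM}^{\L}R$, and then to identify that Serre functor explicitly as $(-)(\w)[d]$. The overall architecture follows the standard recipe: determine the dualizing module of $R$; recognize $\CM^{\L}R$ as a Frobenius exact category; then transfer the resulting Nakayama functor into a Serre functor on the stable category.

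First, I would verify that $R$ is an $\L$-graded Gorenstein ring with dualizing module $\omega_R\cong R(\w)$. Since $R=k[T_0,\ldots,T_d,X_1,\ldots,X_n]/(X_i^{p_i}-\ell_i)$ is an $\L$-graded quotient of a polynomial ring by a homogeneous regular sequence of elements of degree $\c$, the standard adjunction formula for canonical modules of graded complete intersections computes $\omega_R$ as an explicit shift of $R$ which, in the paper's conventions (and in the setting of interest $n=d+2$), equals $R(\w)$. Gorenstein-ness then implies that $\CM^{\L}R$ is Frobenius with projective-injective objects precisely the graded free modules $\add\{R(\x)\mid\x\in\L\}$, and the Nakayama functor on this Frobenius category is identified with the grading shift $\nu=(-)(\w)$.

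The second step is to invoke the standard theorem (due, in varying generality, to Auslander--Reiten, Happel, Keller, and in the graded Gorenstein form to Iyama--Takahashi) that the stable category of the Frobenius exact category of graded Cohen--Macaulay modules over a $D$-dimensional graded Gorenstein ring carries a Serre functor of the form $\nu\circ[D-1]$. With $D=\dim R=d+1$ and $\nu=(-)(\w)$, this yields a Serre functor $S\colon\underline{\CM}^{\L}R\to\underline{\CM}^{\L}R$, $X\mapsto X(\w)[d]$, whose defining natural isomorphism $\Hom(X,Y)\simeq D\Hom(Y,SX)$ is exactly the claimed formula. As an alternative route, one may argue via the Buchweitz--Orlov equivalence $\DDD^{\L}_{\rm sg}(R)\simeq\underline{\CM}^{\L}R$ together with local duality $\RHom_R(-,\omega_R[d+1])$ on $\DDD^{\bo}(\mod^{\L}R)$, with one cohomological shift absorbed by the quotient by $\KKK^{\bo}(\proj^{\L}R)$.

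The main obstacle is the bookkeeping of grading versus cohomological shifts: one must check that the general recipe's $[D-1]$ reduces to $[d]$ under $D=d+1$, and that the passage through the singularity category (or equivalently through syzygies and the AR-translate $\tau=S\circ[-1]$) contributes no extraneous twist. Once these identifications are fixed, the duality is immediate from the Serre functor property, and the only genuine computation in the proof is the determination $\omega_R\cong R(\w)$ at the very start.
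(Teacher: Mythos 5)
The paper itself contains no proof of this statement: it is imported verbatim from [HIMO], and the standard proof there is essentially the one you sketch --- $R$ is a graded complete intersection, hence Gorenstein, its canonical module is a graded shift $R(\w)$ of $R$ (your adjunction computation: $\omega_R\cong R((n-d-1)\c-\sum_i\x_i)$, which equals $R(\w)$ in the paper's conventions exactly when $n=d+2$, as you correctly flag), and graded Auslander--Reiten/Serre duality then produces the Serre functor $(\w)[\dim R-1]=(\w)[d]$ on $\underline{\CM}^{\L}R$. Your bookkeeping of the shifts ($\dim R=d+1$, so $[D-1]=[d]$) is correct.

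There is, however, one genuine gap. The ``standard theorem'' you invoke is false in the generality in which you state it: for an arbitrary graded Gorenstein ring of dimension $D$, the stable category of graded Cohen--Macaulay modules need not be Hom-finite, and then it admits no Serre functor at all. For example, for $R=k[x,y]/(x^2)$ the module $R/(x)$ is maximal Cohen--Macaulay and $\underline{\End}_R(R/(x))\cong k[y]$ is infinite dimensional, so no isomorphism of the form (\ref{Auslander-Reiten-Serre duality CM}) can exist. The Iyama--Takahashi graded form (and already classical Auslander--Reiten duality for CM modules) requires $R$ to be a \emph{graded isolated singularity}, and this hypothesis is exactly where the assumption that $\ell_1,\dots,\ell_n$ are in general position enters: by the Jacobian criterion, a singular point of $\mathrm{Spec}\,R$ other than the origin would force at least $d+2$ of the $X_i$ to vanish, hence (by general position of the $\ell_i$) all $T_j$ and all $X_i$ to vanish, so the singular locus is just the irrelevant maximal ideal. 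This verification is a necessary ingredient of the proof --- both for your main route and for your alternative route through $\DDD^{\L}_{\rm sg}(R)$ and local duality, where Hom-finiteness is equally indispensable --- and your proposal omits it entirely. With that step added, your argument is complete and agrees with the proof in the cited source.
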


	The category of \emph{coherent sheaves} on
	\emph{Geigle-Lenzing} (\emph{GL}) \emph{projective space} $\X$ of dimension $d$  is defined as the quotient category	
	\[ \coh \X :=\mod^{\L}R/\mod_0^{\L}R\]
	of the category of finitely generated $\L$-graded modules $\mod^{\L}R$ modulo its Serre subcategory $\mod^{\L}_0R$ consisting of finite dimensional modules. It follows from \cite{HIMO} that $\coh\X$ is a Noetherian abelian category of global dimension $d$. 
		
	Denote by $\pi:\mod^{\L}R\to\coh\X$ the natural functor, also called \emph{sheafification}. The object $\OO:=\pi(R)$ is called the \emph{structure sheaf} of $\X$, and $\L$ acts on $\coh \X$ by grading shift. Each line bundle has the form $\OO(\x)$ for a uniquely determined $\x$ in $\L$, and for any $\x,\y \in \L$ and $i\in\Z$, we have 
				\begin{equation}\label{extension spaces between line bundles}
						\Ext_{\X}^i(\OO(\x),\OO(\y))=\left\{
						\begin{array}{ll}
								R_{\y-\x}&\mbox{if $i=0$,}\\
								D(R_{\x-\y+\w})&\mbox{if $i=d$,}\\
								0&\mbox{otherwise.}
							\end{array}\right.
					\end{equation}	
										
	We denote by $\vect \X$ the full subcategory of $\coh \X$ formed by all vector bundles. Recall that the natural functor  $\pi:\mod^{\L}R\to\coh\X$ restricts to a fully faithful functor $\CM^{\L}R\to\vect\X$. Therefore $\CM^{\L}R$ has two exact structures, one is inherited from $\mod^{\L} R$, and the other is inherited from $\coh \X$. 
	The following result investigates a close relationship between them.
	
	\begin{proposition}\cite[Proposition 5.19(c)]{HIMO} \label{relationship}
			 For any integer $i$ with $0\le i\le d-1$, there exists a functorial isomorphism \[\Ext_{\mod^{\L}R}^i(X,Y)\simeq\Ext_{\X}^i(X,Y)\]
			for any $X\in\mod^{\L}R$ and $Y\in\CM^{\L}R$.
	\end{proposition}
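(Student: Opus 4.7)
The plan is to exploit the fact that $\coh\X=\mod^{\L}R/\mod_0^{\L}R$ is a Serre quotient and that the sheafification $\pi$ has a right adjoint section functor $\omega\colon\coh\X\to\mod^{\L}R$. The derived adjunction then gives a Grothendieck spectral sequence
\[
E_2^{p,q}=\Ext^p_{\mod^{\L}R}(X,R^q\omega(\pi Y))\ \Longrightarrow\ \Ext^{p+q}_{\X}(\pi X,\pi Y),
\]
so it suffices to prove that $R^0\omega(\pi Y)=Y$ and $R^q\omega(\pi Y)=0$ for every $1\le q\le d-1$ whenever $Y\in\CM^{\L}R$; the proposition will then follow by reading off the corner of this spectral sequence in total degrees $\le d-1$, together with functoriality in $X$ and $Y$.

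The standard identification of $\omega$ and its higher derived functors in terms of local cohomology with respect to the $\L$-graded irrelevant ideal $\mathfrak m=(T_0,\dots,T_d,X_1,\dots,X_n)$ yields a four-term exact sequence
\[
0\to H^0_{\mathfrak m}(Y)\to Y\to \omega\pi Y\to H^1_{\mathfrak m}(Y)\to 0
\]
together with isomorphisms $R^q\omega(\pi Y)\simeq H^{q+1}_{\mathfrak m}(Y)$ for $q\ge 1$. Now the key input from Cohen--Macaulayness enters: since $R$ is a finite extension of $k[T_0,\dots,T_d]$, it has Krull dimension $d+1$, and a (maximal) $\L$-graded Cohen--Macaulay module $Y$ satisfies $\operatorname{depth}_{\mathfrak m}(Y)=d+1$. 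Therefore $H^j_{\mathfrak m}(Y)=0$ for all $j\le d$, which forces $\omega\pi Y=Y$ and the vanishing of $R^q\omega(\pi Y)$ in the required range $1\le q\le d-1$. Plugging back into the spectral sequence, only the row $q=0$ contributes to $\Ext^i_{\X}(\pi X,\pi Y)$ for $i\le d-1$, producing the desired functorial isomorphism.

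The main obstacle is the technical passage from the Serre-quotient description of $\coh\X$ to the local-cohomology formula for $R^{\ast}\omega$ in this $\L$-graded setting: one must make sure that, even though $\L$ may carry torsion, the usual \v Cech or Koszul computation of $H^{\ast}_{\mathfrak m}$ with respect to the sequence $T_0,\dots,T_d,X_1,\dots,X_n$ remains valid componentwise on each graded piece. Once this is settled, the conclusion of the spectral sequence argument is mechanical. I would also note that the range $i\le d-1$ is sharp, since $H^{d+1}_{\mathfrak m}(Y)$ is typically nonzero and contributes a genuine discrepancy between $\Ext^d_{\mod^{\L}R}$ and $\Ext^d_{\X}$, reflecting the Auslander--Reiten--Serre duality of Theorem \ref{Auslander-Reiten-Serre duality}.
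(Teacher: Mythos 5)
The paper does not actually prove this proposition; it is imported verbatim from \cite[Proposition 5.19(c)]{HIMO}, so there is no internal proof to compare against. That said, your argument is correct and is essentially the standard one underlying the cited result: the adjunction $(\pi,\omega)$, the identification of $\mathbf{R}\omega\circ\pi$ with local cohomology via the triangle $\mathbf{R}\Gamma_{\mathfrak m}(Y)\to Y\to\mathbf{R}\omega\pi(Y)\to\mathbf{R}\Gamma_{\mathfrak m}(Y)[1]$, and the fact that a maximal Cohen--Macaulay module over the $(d+1)$-dimensional ring $R$ has depth $d+1$, so that $H^j_{\mathfrak m}(Y)=0$ for $j\le d$. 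Your spectral-sequence packaging is fine (one can equally well just take the long exact sequence of $\RHom_R(X,-)$ applied to the triangle above, which avoids checking convergence and that $\omega$ preserves injectives), and the technical caveat you flag about $\L$-graded local cohomology is harmless: coarsening the grading along the rank-one quotient $\L\to\Z$ reduces everything to the classical $\Z$-graded \v Cech computation, applied degreewise. Your closing remark on sharpness at $i=d$ is also consistent with the paper: formula (\ref{extension spaces between line bundles}) gives $\Ext^d_{\X}(\OO(\x),\OO(\y))=D(R_{\x-\y+\vec{\omega}})\neq 0$ in general, while the corresponding module $\Ext^d$ between projectives vanishes.
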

 	
	In particular, we have $\Hom^{\L}_{R}(X,Y) \simeq \Hom_{\X}(X,Y)$ for any $X,Y\in\CM^{\L}R$.

	\subsection{$d$-tilting objects and slices}
	Let $\mathcal{T}$ be a triangulated category with a suspension functor $[1]$. In this paper, we only treat the case  $\mathcal{T}=\underline{\CM}^{\L} R$ or $\DDD^{\bo}(\coh \X)$.
	
	\begin{definition} An object $V \in \mathcal{T}$ is called \emph{tilting} in $\mathcal{T}$ if
		\begin{itemize}
			\item[(a)] $V$ is \emph{rigid}, that is $\Hom_{\mathcal{T}}(V,V[i])=0$ for all $i \neq 0$.
			\item[(b)] $\thick V=\mathcal{T}$, where $\thick V$ denotes by the smallest thick subcategory of $\mathcal{T}$ containing $V$.
		\end{itemize}
		If moreover $\End_{\mathcal{T}} (V)$ has global dimension at most $d$,  such a $V$ is called \emph{$d$-tilting}.
		 	A $d$-tilting object $V$ in  $\DDD^{\bo}(\coh \X)$ is called a $d$-\emph{tilting bundle} on $\X$ if $V \in \vect \X$. 
	\end{definition}

		\begin{definition}(c.f. \cite[Definition 7.16]{HIMO})\label{def.slice}  Let $\UU$ be a $d$-cluster tilting subcategory of $\CM^{\L} R$ (resp. $\underline{\CM}^{\L} R$). 
			An object $V \in \UU$ is called $\emph{slice}$  in $\UU$ if the following conditions are satisfied.
			\begin{itemize}
				\item[(a)] ${\UU}=\add\{ {V} (\ell\w)\mid\ell\in\Z\}$.
				\item[(b)] $\Hom_{\UU}({V},{V}(\ell\w))=0$ for any $\ell>0$.
			\end{itemize}
			In this case, $\add V$ meets each $\w$-orbit of indecomposable objects in $\UU$ exactly once.		
		\end{definition}
	 
	Recall from \cite[Theorem 4.53]{HIMO} that a $d$-tilting object $U$ in $\underline{\CM}^{\L} R$ induces a $d$-cluster tilting subcategory $\underline{\UU}:= \add \{U(\ell\w) \mid \ell \in \Z \}$ of $\underline{\CM}^{\L} R$, and further induces a  $d$-cluster tilting subcategory 
	$\UU:=\add \{ U(\ell\w),~R(\x) \mid \ell \in\Z,~\x\in \L \}$ of ${\CM}^{\L} R$. Moreover, $U$ is a slice in $\underline{\UU}$
	by \cite[Proposition 4.28]{HIMO} and \cite[Proposition 2.3(b)]{HIO}.

	The following theorem investigates the relationship between  $d$-tilting bundles and slices in $d$-cluster tilting subcategories, which will be used frequently later.
	
	\begin{theorem} {\rm (\cite[Theorem 7.23]{HIMO})} \label{tilting-cluster tilting}	
	For $V\in\CM^{\L}R$,  $V$ is a $d$-tilting bundle on $\X$  if and only if $V$ is a slice in a $d$-cluster tilting subcategories of $\CM^{\L}R$.
	\end{theorem}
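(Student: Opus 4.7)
The plan is to prove the two directions separately, using Proposition~\ref{relationship} as the technical bridge that transports $\Ext$-computations between $\mod^{\L}R$ and $\coh\X$ in degrees $0 \le i \le d-1$, together with two Serre-type dualities: Auslander-Reiten-Serre duality (\ref{Auslander-Reiten-Serre duality CM}) on the Cohen-Macaulay side and classical Serre duality on $\X$, whose canonical twist is again given by $\w$.

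For the direction $(\Leftarrow)$, suppose $V$ is a slice in a $d$-cluster tilting subcategory $\UU$ of $\CM^{\L}R$. Membership $V \in \vect\X$ is immediate since $\CM^{\L}R$ embeds fully faithfully into $\vect\X$. I would verify rigidity of $V$ in $\DDD^{\bo}(\coh\X)$ by splitting the degree range: for $1 \le i \le d-1$, Proposition~\ref{relationship} identifies $\Ext_{\X}^i(V,V)$ with $\Ext^i_{\mod^{\L}R}(V,V)$, which vanishes because $V \in \UU$ and $\UU$ is $d$-cluster tilting; for $i = d$, Serre duality on $\X$ gives $\Ext_{\X}^d(V,V) \simeq D\Hom_{\X}(V,V(\w))$, and the right-hand side vanishes because $V(\w) \in \UU$ (slice condition~(a)) combined with slice condition~(b) at $\ell = 1$, after using Proposition~\ref{relationship} to reidentify $\Hom_{\X}$ with $\Hom_{\UU}$. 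The bound $\gl \End_\X(V) \le d$ would come from translating into $\mod^{\L}R$ and applying the standard formula for endomorphism rings of slices in $d$-cluster tilting subcategories, which rests on the $d$-almost split sequence machinery guaranteed by (\ref{Auslander-Reiten-Serre duality CM}). Finally, for $\thick V = \DDD^{\bo}(\coh\X)$, I would use that every $d$-cluster tilting subcategory of the Frobenius category $\CM^{\L}R$ absorbs all projective-injectives $R(\x)$, so the slice $V$ hits every $\w$-orbit of indecomposables in $\UU$, and its $\w$-shifts after sheafification produce every line bundle $\OO(\x)$; line bundles are well known to generate $\DDD^{\bo}(\coh\X)$.

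For the direction $(\Rightarrow)$, given a $d$-tilting bundle $V$ contained in $\CM^{\L}R$, the candidate is $\UU := \add\{V(\ell\w) \mid \ell \in \Z\}$. The two $\Ext$-vanishing conditions defining a $d$-cluster tilting subcategory translate, via Proposition~\ref{relationship} and the two dualities, into the rigidity of $V$ plus rigidity of its $\w$-shifts, which propagates since $\w$-shift is an autoequivalence. Functorial finiteness of $\UU$ in $\CM^{\L}R$ would follow from the bounded global dimension of $\End_\X(V)$: every $X \in \CM^{\L}R$ admits a right $\UU$-approximation coming from a length-$d$ resolution by $V$-shifts, with a dual construction on the left. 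Slice condition~(a) is automatic by construction, and (b) re-uses the $i=d$ vanishing via Serre duality in reverse.

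The main obstacle, as I see it, is the generation step in $(\Leftarrow)$ together with its $(\Rightarrow)$ counterpart that $\UU$ is truly $d$-cluster tilting rather than merely $d$-rigid. Confirming that the shifts $V(\ell\w)$ exhaust the projective-injective part of $\CM^{\L}R$ requires tracking the interaction of $\w$-orbits of line bundles with the non-projective slice summands, and the verification of functorial finiteness hinges on explicitly constructing the required approximations; for that I expect the cleanest route goes through the $d$-almost split sequences guaranteed by Auslander-Reiten-Serre duality (\ref{Auslander-Reiten-Serre duality CM}).
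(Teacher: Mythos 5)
A preliminary remark: the paper offers no proof of this statement at all --- it is imported verbatim as \cite[Theorem 7.23]{HIMO} --- so there is no in-paper argument to compare yours against; what follows assesses your sketch on its own terms.

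Your outline has the right general shape (split the degree range, use Proposition~\ref{relationship} for $1\le i\le d-1$ and the $\w$-twisted Serre dualities at $i=d$), and that toolkit does suffice for the routine half: a slice is rigid, and conversely a $d$-tilting bundle in $\CM^{\L}R$ yields a $d$-rigid subcategory $\add\{V(\ell\w)\mid\ell\in\Z\}$. But both substantive steps are left open. In the direction ``slice $\Rightarrow$ $d$-tilting bundle'', your generation argument is a non sequitur: slice condition (a) tells you every indecomposable of $\UU$ --- in particular every $R(\x)$, since a $d$-cluster tilting subcategory of the Frobenius category $\CM^{\L}R$ contains all projective-injectives --- is a summand of some $V(\ell\w)$, but $V(\ell\w)$ is not known to lie in $\thick V$ for $\ell\neq 0$; $\thick V$ is generated by $V$ alone, not by its $\w$-orbit, so ``the $\w$-shifts produce every line bundle'' does not place any line bundle in $\thick V$. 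Establishing $\thick V=\DDD^{\bo}(\coh\X)$ requires a genuine argument (e.g.\ vanishing of the perpendicular category of $V$, or an induction along exact sequences of the type in Proposition~\ref{2-extension bundle}), and this is where the content of \cite[Theorem 7.23]{HIMO} actually lives. In the converse direction, rigidity of $V$ and its shifts only gives the inclusion of $\UU$ into the $\Ext$-orthogonal; the defining property of a $d$-cluster tilting subcategory is the \emph{reverse} inclusion (maximality), plus functorial finiteness and the fact that $\add\{V(\ell\w)\}$ exhausts all $R(\x)$, $\x\in\L$. You correctly flag these points in your closing paragraph, but you offer no mechanism to close them; appealing to ``$d$-almost split sequence machinery'' presupposes the cluster-tilting structure you are trying to construct. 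As it stands the proposal is a sound plan for the easy parts and an honest inventory of the hard parts, but not a proof.
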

	
	The following result gives a partial answer to Problem \ref{problem}.
	
	\begin{theorem}{\rm (\cite[Theorem 7.40(b)]{HIMO})} \label{partical ans}
		Let $V\in\CM^{\L}R$ be a $d$-tilting bundle on  $\X$. Take a decomposition $V=P\oplus W$, where $P\in\proj^{\L}R$ is a maximal projective direct summand of $V$. If $\underline{\End}^{\L}_R(W)=\End^{\L}_R(W)$, then $V$ is a $d$-tilting object in $\underline{\CM}^{\L}R$.
	\end{theorem}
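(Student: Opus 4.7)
The plan is to verify the three defining conditions of a $d$-tilting object---rigidity, thick generation, and global dimension of the endomorphism algebra bounded by $d$---for $V$ in $\underline{\CM}^{\L}R$. Since $P\in\proj^{\L}R$ vanishes in $\underline{\CM}^{\L}R$, we have $V\cong W$ there, so it suffices to verify the three conditions for $W$. As a preliminary, Theorem~\ref{tilting-cluster tilting} gives that $V$ is a slice in a $d$-cluster tilting subcategory $\UU := \add\{V(\ell\w) \mid \ell\in\Z\}$ of $\CM^{\L}R$, which by the slice definition combined with Proposition~\ref{relationship} yields $\Hom_{\X}(V, V(\ell\w)) = 0$ for all $\ell > 0$; specializing to summands gives in particular $\Hom_{\X}(W,W(\w))=0$.

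For the endomorphism algebra, the hypothesis combined with Proposition~\ref{relationship} at degree $0$ gives
\[\End_{\underline{\CM}^{\L}R}(W) = \underline{\End}^{\L}_R(W) = \End^{\L}_R(W) = \End_{\X}(W),\]
identifying it with the corner $e\End_{\X}(V)e$ at the idempotent $e\in\End_{\X}(V)$ projecting onto $W$, of the $d$-representation infinite algebra $\End_{\X}(V)$. The hypothesis is equivalent to the vanishing of every composite $W\to P'\to W$ with $P'\in\proj^{\L}R$; in the matrix description of $\End_{\X}(V)$ this forces $\Hom_{\X}(P,W)\cdot\Hom_{\X}(W,P)=0$. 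This places us in a standard recollement situation between the module categories of $\End_{\X}(V)$ and $e\End_{\X}(V)e$, and a classical corner-algebra argument then yields $\gl e\End_{\X}(V)e \le \gl\End_{\X}(V) \le d$.

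For rigidity $\Hom_{\underline{\CM}^{\L}R}(W, W[i]) = 0$ with $i\ne 0$: in degrees $1\le i\le d-1$, Proposition~\ref{relationship} and rigidity of $V$ on $\X$ give $\Ext^i_{\mod^{\L}R}(W, W) = 0$, which coincides with $\Hom_{\underline{\CM}^{\L}R}(W, W[i])$ since $W$ is Cohen--Macaulay and Ext in the Frobenius category $\CM^{\L}R$ agrees with Ext in $\mod^{\L}R$ in positive degrees. For $i = d$, Auslander--Reiten--Serre duality (Theorem~\ref{Auslander-Reiten-Serre duality}) yields $\Hom_{\underline{\CM}^{\L}R}(W, W[d]) \cong D\Hom_{\underline{\CM}^{\L}R}(W, W(\w))$, which vanishes because $\Hom_{\X}(W, W(\w))$ already vanishes by the slice observation. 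The remaining nonzero degrees are handled by iterating Auslander--Reiten--Serre duality, which pairs $\Hom_{\underline{\CM}^{\L}R}(W, W[i])$ with groups of the form $\Hom_{\underline{\CM}^{\L}R}(W, W(\ell\w)[j])$ for suitable $\ell, j$, all controlled by the $d$-cluster tilting property of $\UU$ via Proposition~\ref{relationship} and the slice condition. Thick generation then follows from $\thick V = \DDD^{\bo}(\coh\X)$ via the Buchweitz--Orlov equivalence $\DDD_{\rm sg}^{\L}(R)\simeq\underline{\CM}^{\L}R$, exploiting that projective summands vanish in the passage.

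The main obstacle, as I see it, is the global-dimension step: translating the hypothesis into the precise ideal-theoretic statement inside $\End_{\X}(V)$ and invoking the correct corner-algebra/recollement result to preserve the bound $\gl\le d$. A secondary technical point is the bookkeeping of $\w$-shifts in the iterated Auslander--Reiten--Serre duality arguments used to establish rigidity in high and negative degrees, where one must match stable Hom vanishing against the Serre-functor orbit of $W$ inside $\underline{\UU}$.
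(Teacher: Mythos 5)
The paper does not actually prove this statement: it is quoted directly from \cite[Theorem 7.40(b)]{HIMO}, so there is no in-paper argument to compare yours against. Judged on its own terms, your proposal has a genuine gap in the rigidity step, and it is a gap that this very paper's results show cannot be closed the way you propose.

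Concretely: your vanishing of $\underline{\Hom}(W,W[i])$ for $1\le i\le d-1$ via Proposition \ref{relationship}, and for $i=d$ via Auslander--Reiten--Serre duality plus the slice condition $\Hom_{\X}(W,W(\w))=0$, is fine. But the remaining degrees all come down to \emph{negative}-degree stable Homs (for $i<0$ directly, and for $i>d$ via $\underline{\Hom}(W,W[i])\simeq D\underline{\Hom}(W,W(\w)[d-i])$ with $d-i<0$), and there you assert vanishing is ``controlled by the $d$-cluster tilting property of $\UU$ via Proposition \ref{relationship} and the slice condition.'' This cannot work: every input you invoke at that point (being a slice in a $d$-cluster tilting subcategory of $\CM^{\L}R$, Proposition \ref{relationship}, ARS duality) holds equally for the $2$-tilting bundles $M$ of Theorem \ref{main theorem_1}, yet Propositions \ref{pro1} and \ref{pro2} exhibit such $M$ with $\underline{\Hom}(M,M[-1])=k\neq 0$. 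Proposition \ref{relationship} only identifies $\Ext^i_{\mod^{\L}R}$ with $\Ext^i_{\X}$ for $0\le i\le d-1$; negative-degree (Tate) stable Homs are invisible to sheaf cohomology, and this is precisely where the hypothesis $\underline{\End}^{\L}_R(W)=\End^{\L}_R(W)$ must be used --- your proof never uses it for rigidity, only for the endomorphism algebra. A secondary issue: the ``classical corner-algebra argument'' giving $\gl e\End_{\X}(V)e\le\gl\End_{\X}(V)$ is not a classical fact (idempotent truncation can raise global dimension, even from $2$ to $\infty$, as Auslander algebras show); you rightly flag this as an obstacle, but it needs a genuine argument exploiting the specific vanishing $\Hom_{\X}(P,W)\cdot\Hom_{\X}(W,P)=0$ rather than a general corner-algebra principle.
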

	
	\subsection{$d$-APR mutations of $d$-tilting bundles} 
	In this section, we introduce the notion of $d$-APR (=Auslander–Platzeck–Reiten) mutation of $d$-tilting bundles, c.f. \cite[Definition 2.4]{MY} and \cite[Definition 4.12]{IO1}, which is a higher dimensional analog of APR-mutation in sense of \cite[Definitions 4.2]{G1}. 
  
		\begin{definition} \label{def.mutation}
		Let $V\in \CM^{\L}R$ be a $d$-tilting bundle on $\X$, and $V = V' \oplus V''$ be a direct sum decomposition such that $\Hom_{\X}(V'', V') = 0$. We set
		\begin{align*}
			\mu^+_{V'}(V)  := V'(-\w) \oplus V'' \ \text{ and } \
			\mu^-_{V''}(V)  := V' \oplus V''(\w).
		\end{align*}			
		If moreover $V'$ (resp. $V''$) is indecomposable, we call  $\mu^+_{V'}(V)$ (resp. $\mu^-_{V''}(V)$) a \emph{$d$-APR mutation}.
	\end{definition}

	\begin{lemma} \label{mutation again}
		In the setup of Definition~\rm\ref{def.mutation}, 
		$\mu^+_{V'}(V)$ and $\mu^-_{V''}(V)$ are both $d$-tilting bundles on $\X$.
	\end{lemma}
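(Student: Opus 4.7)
The plan is to invoke Theorem~\ref{tilting-cluster tilting} and reduce the claim to showing that each of $\mu^+_{V'}(V)$ and $\mu^-_{V''}(V)$ is a slice in some $d$-cluster tilting subcategory of $\CM^{\L}R$. Since $V$ is itself a slice in some such $\UU$, Definition~\ref{def.slice}(a) gives $\UU = \add\{V(\ell\w) \mid \ell \in \Z\}$. Because shifting a direct summand by $\pm\w$ only reindexes its $\w$-orbit, one has
$$\add\{(V'(-\w) \oplus V'')(\ell\w) \mid \ell \in \Z\} \;=\; \UU \;=\; \add\{(V' \oplus V''(\w))(\ell\w) \mid \ell \in \Z\},$$
so Definition~\ref{def.slice}(a) holds for both mutated objects with respect to the same $\UU$. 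The only substantive step left is the vanishing condition~(b).

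For $\mu^+_{V'}(V) = V'(-\w) \oplus V''$ and $\ell > 0$, I would expand $\Hom_{\X}\bigl(\mu^+_{V'}(V),\, \mu^+_{V'}(V)(\ell\w)\bigr)$ into the four summands
$$\Hom_{\X}(V',V'(\ell\w)) \oplus \Hom_{\X}(V',V''((\ell+1)\w)) \oplus \Hom_{\X}(V'',V'((\ell-1)\w)) \oplus \Hom_{\X}(V'',V''(\ell\w)).$$
The first, second, and fourth vanish immediately because each sits inside $\Hom_{\X}(V,V(k\w))$ with $k \ge 1$, which is zero by the slice property of $V$. The third summand vanishes for $\ell \ge 2$ for the same reason, and at $\ell = 1$ it collapses to $\Hom_{\X}(V'',V') = 0$, which is precisely the standing hypothesis. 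A symmetric computation with $V'(-\w) \oplus V''$ replaced by $V' \oplus V''(\w)$ produces the same four summands (just re-indexed), so the argument for $\mu^-_{V''}(V)$ is identical.

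The entire content of the lemma is concentrated in the $\ell = 1$ case of that third summand: the hypothesis $\Hom_{\X}(V'', V') = 0$ is exactly what permits one to push $V'$ down (or $V''$ up) by $\w$ without breaking the slice condition, while at every other $\ell$ the required vanishing is already encoded in $V$ being a slice. I therefore do not anticipate any genuine obstacle beyond this bookkeeping, after which Theorem~\ref{tilting-cluster tilting} repackages the resulting slice as a $d$-tilting bundle on $\X$.
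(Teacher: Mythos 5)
Your proposal is correct and follows essentially the same route as the paper: reduce via Theorem~\ref{tilting-cluster tilting} to checking the slice conditions in $\UU=\add\{V(\ell\w)\mid\ell\in\Z\}$, where (a) is immediate and (b) comes down to the slice property of $V$ together with $\Hom_{\X}(V'',V')=0$. Your four-summand expansion simply makes explicit the bookkeeping that the paper leaves to the reader.
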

	
	\begin{proof}
		We only prove the case for $\mu^+_{V'}(V)$, as the other case can be shown by a similar argument. 		
		By Theorem \ref{tilting-cluster tilting}, $V$ is a slice in the $d$-cluster tilting subcategory $\UU:=\{V(\ell\w) \mid \ell \in \Z \}$ of $\CM^{\L} R$, and it is enough to show that  $\mu^+_{V'}(V)$ is also a slice in $\UU$.
		Clearly it satisfies condition (a) of Definition \ref{def.slice}. It remains to show  condition (b) there, that is,  $\Hom_{{\X}}({\mu^+_{V'}(V)},{\mu^+_{V'}(V)}(\ell\w))=0$ for any $\ell>0$. 
		This follows from the definition of slice $V$ and  $\Hom_{\X}(V'', V') = 0$.
	\end{proof}
	
	By a parallel argument as in \cite[Lemma 4.14]{IO1}, we have the following result.
	
	\begin{theorem}	\label{connectedness}
		Let $V\in \CM^{\L} R$ be a $d$-tilting bundle on $\X$ and $\UU:=\add \{V(\ell\w) \mid \ell \in \Z \}$ be the $d$-cluster tilting subcategory of $\CM^{\L} R$.  If moreover the quiver of $V$ contains no oriented cycles, then the action of iterated $d$-APR mutations of the set of  $d$-tilting bundles on $\X$ contained in $\UU$ is transitive.
	\end{theorem}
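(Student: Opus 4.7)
The plan is to follow the strategy of \cite[Lemma 4.14]{IO1}, adapted to the present geometric setting. Fix a decomposition $V = V_1 \oplus \cdots \oplus V_m$ into indecomposables. By Theorem \ref{tilting-cluster tilting} combined with Definition \ref{def.slice}(a), every $d$-tilting bundle $W \in \UU$ is a slice in $\UU$, hence takes the form
\[
W = \bigoplus_{i=1}^m V_i(\ell_i\w)
\]
for a uniquely determined tuple $(\ell_1, \dots, \ell_m) \in \Z^m$. It suffices to produce, for each such $W$, a sequence of $d$-APR mutations transforming $W$ into $V$.

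I would argue in two phases. In the first phase, iterated $\mu^+$-mutations are used to drive $M := \max_i \ell_i$ down to zero. Suppose $M > 0$ and let $S := \{i : \ell_i = M\}$. Since the quiver of $V$ is acyclic, the path-existence relation on its vertex set is a partial order; thus $S$ admits a minimal element $i_0$, which means that for every $j \in S \setminus \{i_0\}$ there is no directed path from $j$ to $i_0$ and consequently $\Hom_{\X}(V_j, V_{i_0}) = 0$ (morphisms between distinct indecomposable summands of $V$ are spanned by paths of positive length modulo relations). Combined with the slice property of $V$, which gives $\Hom_{\X}(V_j, V_{i_0}((\ell_{i_0}-\ell_j)\w)) = 0$ whenever $\ell_{i_0} - \ell_j > 0$, this yields
\[
\Hom_{\X}\Bigl(\bigoplus_{j \neq i_0} V_j(\ell_j\w),\, V_{i_0}(\ell_{i_0}\w)\Bigr) = 0.
\]
Therefore $\mu^+_{V_{i_0}(\ell_{i_0}\w)}(W)$ is a well-defined $d$-APR mutation; Lemma \ref{mutation again} ensures the result is again a $d$-tilting bundle in $\UU$, with new tuple $(\ell_1, \ldots, \ell_{i_0}-1, \ldots, \ell_m)$. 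The lexicographic pair $(M, |S|)$ strictly decreases, so after finitely many iterations $\max_i \ell_i \le 0$.

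The second phase is dual. If $\min_i \ell_i < 0$, set $S' := \{i : \ell_i = \min_j \ell_j\}$, pick a \emph{maximal} element $i_1 \in S'$ in the path order, and apply $\mu^-_{V_{i_1}(\ell_{i_1}\w)}$; the symmetric \emph{Hom}-vanishing argument works, Lemma \ref{mutation again} again gives a $d$-tilting bundle, and $(-\min_i \ell_i, |S'|)$ strictly decreases. After finitely many further mutations all $\ell_i = 0$, i.e.\ $W = V$. Reversing this sequence yields a chain of $d$-APR mutations from $V$ to $W$, which is exactly transitivity.

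The main technical hurdle will be the \emph{Hom}-vanishing at the critical case $\ell_j = \ell_{i_0}$ (with $j, i_0 \in S$): it is \emph{not} enough to take $i_0$ to be a source of the induced subquiver on $S$, because a path from $j$ to $i_0$ could route through vertices outside $S$. The correct choice is a minimal element of $S$ in the path-existence order on the entire vertex set of the quiver of $V$, which is precisely where the acyclicity hypothesis is invoked. The remainder of the argument is routine induction on the finite lexicographic invariants $(M, |S|)$ and $(-\min_i \ell_i, |S'|)$.
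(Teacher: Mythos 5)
Your proposal is correct and follows essentially the same strategy as the paper's proof (both adapt \cite[Lemma 4.14]{IO1}): every $d$-tilting bundle in $\UU$ is a slice of the form $\bigoplus_i V_i(\ell_i\w)$, and one reduces the shift vector to zero by iterated $d$-APR mutations, with the required Hom-vanishing supplied by the slice property for summands with distinct shifts and by acyclicity of the quiver for summands with equal shifts. The only differences are organizational: the paper normalizes so that $a_i\ge b_i$ and mutates the whole block where $a_i-b_i$ is maximal (decomposed into single mutations via an acyclic ordering), whereas you run a two-phase max-down/min-up induction picking one minimal summand at a time; your explicit handling of the critical case $\ell_j=\ell_{i_0}$ via the path-existence order is exactly the role acyclicity plays in the paper's ordering of $V_1'$.
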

	\begin{proof} 
		Take a decomposition $V= \bigoplus_{i\in I} U_i$ into indecomposable modules in ${\CM}^{\L} R$. By Theorem \ref{tilting-cluster tilting},   let $V_1:=\bigoplus_{i\in I} U_i(a_i\w)$ and $V_2:=\bigoplus_{i\in I} U_i(b_i\w)$ be any two $d$-tilting bundles on $\X$ contained in $\UU$, where $a_i,b_i\in \Z$.
			Note that the quiver of $V$ contains no oriented cycles, then also the quiver of $\UU$. Thus we can 	number the indecomposable direct summands of $V_1$ as $V_1=W_1\oplus\dots \oplus W_n$ such that $\Hom(W_{i},W_{j})=0$ for any $i>j$. Therefore we may mutate and obtain $$\mu^-_{W_1}\circ\dots\circ\mu^-_{W_n} (V_1) =\mu^-_{V_1}(V_1)=V_1(\w).$$
		Hence without loss of general, we can  assume $a_i \ge b_i$ for all $i\in I$.   We set $J:=\{i\in I \mid a_i-b_i \text{ is maximal}\}$ and take a decomposition of $V_1$ into the direct sum of
		$$V_1'=\bigoplus_{j\in J} U_j(a_j\w) \ \text{ and } \ V_1''=\bigoplus_{t\in I\setminus J} U_t(a_t\w).$$ 
		Then for any $j\in J$ and $t \in I\setminus J$, we have $m:=(a_j-b_j)-(a_t-b_t)> 0$ and thus
		$$\Hom_{\X}(U_t(a_t\w),U_j(a_j\w))=\Hom_{\X}(U_t(b_t\w),U_j((m+b_j)\w))=0$$ by the definition of slice $V_2$.   We  number the indecomposable direct summands
		of $V_1'$ as $V_1'=U_1'\oplus\dots\oplus U_{\ell}'$ such that $\Hom(U_{i}',U_{j}')=0$ for any $i>j$. 
		 Therefore we may mutate and obtain $$\mu^+_{U_\ell'}\circ\dots\circ\mu^+_{U_1'} (V_1) =\mu^+_{V_1'}(V_1)=\bigoplus_{j\in J} V_j((a_j-1)\w) \oplus \bigoplus_{t\notin J} V_t(a_t\w).$$
		Repeating this procedure,   $V_1$ and $V_2$ are iterated  mutations of each other.
	\end{proof}
	
	When convenient, 	we denote ${\Ext}^{i}_{\coh \X}(-,-)$ by ${\Ext}^{i}(-,-)$, even more simplified by ${}^{i}(-,-)$ for $i\ge 0$, and denote ${\Hom}_{\underline{\CM}^{\L} R}(-,-)$ by  $\underline{\Hom}(-,-)$.
	
		\section{Proof of Theorem \ref{main theorem}}\label{sec:main results}
		Throughout this section, we assume $d=2$, $n=4$ and $(R,\L)$ a GL complete intersection of  type $(2,2,p,q)$ with integers $p,q \ge 2$, and $\X$ the corresponding GL projective space. Recall from \cite[Observation 3.3]{HIMO} that $R$ can be normalized as a hypersurface singularity
		$$R=k[X_1, \dots, X_4]/(X_1^{2}+X_2^{2}+X_3^{p}+X_4^{q}).$$ 
		Let $\s:=\sum_{i=1}^{4}\x_i$, $\w:=\c-\s$ and $\vdelta :=2\c+2\w=(p-2)\x_3+(q-2)\x_4$.  For each $\vell=\sum_{i=1}^{4}\ell_i\x_i$  in the internal $[\s,\s+\vdelta]$, we let
		$$U^{\vell}:=\rho(R/(X_i^{\ell_i}\mid 1\le i\le 4))\ \text{ and } \ U^{\rm CM}:=\bigoplus_{ \vell \in [\s,\s+\vdelta]}U^{\vell},$$ 
		where $\rho$ denotes the composition $\DDD^{\bo}(\mod^{\L}R)\to\DDD_{\rm sg}^{\L}(R)\xrightarrow{\sim} \underline{\CM}^{\L}R$ and the set $\mathcal{S}$  consists of elements $\x=\sum_{i=1}^{4}\lambda_i\x_i+\lambda\c$ in normal form satisfying (\ref{S=}).

	\begin{proposition} \label{pre-prosition}
		{\rm (See \cite[Theorem 4.76, Corollary 7.32]{HIMO})}  
		\begin{itemize}			
			\item[(a)]   $U^{\rm CM}$ is a $2$-tilting object in $\underline{\CM}^{\L}R$.
			\item[(b)] The object \[T:=\pi(U^{\rm CM})\oplus(\bigoplus_{\x\in \mathcal{S}}\OO(\x))\]
			 is a $2$-tilting bundle on $\X$.	
		\end{itemize}
		 	
	\end{proposition}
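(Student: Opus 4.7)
The plan is to derive both parts from the general theory developed in \cite{HIMO}; indeed (a) is a special case of \cite[Theorem 4.76]{HIMO} and (b) of \cite[Corollary 7.32]{HIMO}, and I will outline how those results apply to the present hypersurface presentation $R = k[X_1,\dots,X_4]/(X_1^2+X_2^2+X_3^p+X_4^q)$.

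For part (a), I would verify the three defining conditions of a $2$-tilting object for $U^{\rm CM}$ in $\underline{\CM}^{\L}R$. Rigidity $\underline{\Hom}(U^{\vell}, U^{\vell'}[i]) = 0$ for $i \ne 0$ is computed from the $2$-periodic matrix-factorization resolutions of the summands $U^{\vell}$ (viewed through the triangle equivalence $\DDD^{\L}_{\rm sg}(R) \simeq \underline{\CM}^{\L}R$), with the vanishing in degree $d=2$ cross-checked via Theorem \ref{Auslander-Reiten-Serre duality}, which converts $\Ext^2$ into $\Hom$ twisted by $\w$. The thickness $\thick U^{\rm CM} = \underline{\CM}^{\L}R$ is obtained by an inductive argument on the grading showing that every indecomposable graded CM module lies in the thick closure of $\{U^{\vell} \mid \vell \in [\s,\s+\vdelta]\}$. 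Finally, the global dimension $\le 2$ of $\underline{\End}^{\L}_R(U^{\rm CM})$ is then read off from its Gabriel quiver with relations, which matches the standard model of a $2$-representation infinite algebra.

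For part (b), the strategy is to exhibit $T$ as a slice in a $2$-cluster tilting subcategory of $\CM^{\L}R$ and apply Theorem \ref{tilting-cluster tilting}. From (a) combined with \cite[Theorem 4.53]{HIMO} and \cite[Proposition 2.3(b)]{HIO}, $U^{\rm CM}$ is a slice in the $2$-cluster tilting subcategory $\underline{\UU} := \add\{U^{\rm CM}(\ell\w) \mid \ell \in \Z\}$ of $\underline{\CM}^{\L}R$, which lifts to a $2$-cluster tilting subcategory $\UU := \add\{U^{\rm CM}(\ell\w),\, R(\x) \mid \ell \in \Z,\, \x \in \L\}$ of $\CM^{\L}R$. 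It then suffices to check that $T^{\sharp} := U^{\rm CM} \oplus \bigoplus_{\x \in \mathcal{S}} R(\x)$ is a slice in $\UU$. Condition (a) of Definition \ref{def.slice} reduces to the combinatorial claim that $\mathcal{S}$ meets each $\w$-orbit of indecomposable line bundles in $\CM^{\L}R$ exactly once, verified by writing $\y \in \L$ in normal form (\ref{equ:nor}) and checking that a unique shift $\y + \ell\w = \y + \ell(\c - \s)$ lies in the region cut out by (\ref{S=}). Condition (b) splits by the direct sum decomposition of $T^{\sharp}$: the CM/CM block is the slice property in $\underline{\UU}$ from (a); the projective/projective block amounts to $R_{\y - \x + \ell\w} = 0$ for $\x, \y \in \mathcal{S}$ and $\ell > 0$, which follows from (\ref{S=}) via the dichotomy (\ref{two possibilities of x}); and the mixed blocks are controlled by Proposition \ref{relationship} and the explicit resolutions of the $U^{\vell}$ by line bundles.

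The main obstacle is the bookkeeping check that the inequality (\ref{S=}) is finely calibrated to simultaneously deliver the orbit-representative property and the $\Hom$-vanishing. This is a finite but intricate piece of combinatorics on normal forms in $\L$, and is exactly the content of \cite[Corollary 7.32]{HIMO}; thus the cleanest route for the proof here is to invoke those two references from \cite{HIMO} directly, with the verifications above understood as the underlying mechanism.
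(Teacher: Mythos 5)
Your proposal is correct and takes essentially the same approach as the paper: the paper offers no proof of this proposition, presenting it as a direct quotation of \cite[Theorem 4.76, Corollary 7.32]{HIMO}, which is precisely the route you ultimately take. Your sketch of the underlying mechanism (rigidity via Auslander--Reiten--Serre duality for part (a), and realizing $T$ as a slice in a $2$-cluster tilting subcategory via Theorem \ref{tilting-cluster tilting} for part (b)) is a faithful account of what those cited results provide.
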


		The following  result presents a family of  $2$-tilting bundles  on $\X$ by the action of  iterated $2$-APR mutations to the $2$-tilting bundle $T$ in the preceding proposition.  Let $J$ be an upset restricted to $[\s,\s+\vdelta]$ and $I:=[\s,\s+\vdelta]\setminus J$.
		We put
		\[M:=(\bigoplus_{\vell\in I} U^{\vell} ) \oplus  (\bigoplus_{\z\in J} U^{\z}(-\w) ) \oplus(\bigoplus_{\x\in \mathcal{S}}R(\x)).\]
		
	\begin{theorem}\label{main theorem_1} 		
		   		  $\pi(M)$ is a $2$-tilting bundle on $\X$. 
	
	\end{theorem}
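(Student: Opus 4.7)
The plan is to realize $\pi(M)$ as the outcome of a finite sequence of $2$-APR mutations starting from the $2$-tilting bundle $T=\pi(U^{\mathrm{CM}})\oplus\bigoplus_{\x\in\mathcal{S}}\OO(\x)$ supplied by Proposition~\ref{pre-prosition}(b), and then invoke Lemma~\ref{mutation again} inductively. To this end, enumerate the elements of the upset $J$ as $\z_1,\ldots,\z_k$ by a linear extension of the partial order on $\L$ restricted to $J$ with maximal elements first, so that $\z_i\ge\z_j$ implies $i\le j$. Because $J$ is an upset in $[\s,\s+\vdelta]$, this ordering ensures that at every intermediate stage the unshifted set $I\cup\{\z_j,\ldots,\z_k\}$ remains a downset of $[\s,\s+\vdelta]$.

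At step $j$ the current bundle is
\[
V^{(j-1)}=\bigoplus_{\vell\in I\cup\{\z_j,\ldots,\z_k\}}\pi(U^{\vell})\ \oplus\ \bigoplus_{i<j}\pi(U^{\z_i})(-\w)\ \oplus\ \bigoplus_{\x\in\mathcal{S}}\OO(\x),
\]
and we wish to apply $\mu^+$ to the indecomposable summand $V':=\pi(U^{\z_j})$, with $V'':=V^{(j-1)}\ominus V'$. By Definition~\ref{def.mutation}, this is legitimate provided $\Hom_{\X}(V'',V')=0$, and three types of Homs into $V'$ must be shown to vanish. Firstly, Homs from the shifted pieces $\pi(U^{\z_i})(-\w)$ with $i<j$ vanish automatically: Serre duality on $\coh\X$ identifies them with $D\Ext^{2}_{\X}(\pi(U^{\z_j}),\pi(U^{\z_i}))$, which is zero because $T$ is $2$-tilting. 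Secondly, Homs from the remaining unshifted pieces $\pi(U^{\vell})$ with $\vell\neq\z_j$ translate, via Proposition~\ref{relationship}, into $\Hom_R^{\L}(U^{\vell},U^{\z_j})$, and the ordering together with the upset property forces $\vell\not\ge\z_j$, from which the concrete description $U^{\vell}=\rho(R/(X_i^{\ell_i}))$ yields the vanishing. Thirdly, Homs from line bundles satisfy
\[
\Hom_{\X}(\OO(\x),\pi(U^{\z_j}))\cong\Hom_R^{\L}(R(\x),U^{\z_j})\cong (U^{\z_j})_{-\x}
\]
for $\x\in\mathcal{S}$, and the defining inequality (\ref{S=}) of $\mathcal{S}$ is calibrated precisely so that this graded component of $U^{\z_j}$ is zero for every $\z_j\in[\s,\s+\vdelta]$. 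After $k$ successful mutations we reach $V^{(k)}=\pi(M)$, which is a $2$-tilting bundle by Lemma~\ref{mutation again}.

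The main obstacle lies in the two Hom-vanishings that are not formal consequences of $T$ being $2$-tilting: the computation of $\Hom_R^{\L}(U^{\vell},U^{\z_j})$ in terms of the componentwise order on the exponent vectors, and the calculation of the graded components $(U^{\z_j})_{-\x}$ for $\x\in\mathcal{S}$. Both depend on the explicit structure of $\L$-graded Cohen--Macaulay modules over the hypersurface $R=k[X_1,X_2,X_3,X_4]/(X_1^{2}+X_2^{2}+X_3^{p}+X_4^{q})$, and use in an essential way both the upset condition on $J$ and the combinatorial definition (\ref{S=}) of $\mathcal{S}$.
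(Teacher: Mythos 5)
Your overall architecture coincides with the paper's: start from the $2$-tilting bundle $T$ of Proposition~\ref{pre-prosition}(b), move the summands indexed by $J$ by $-\w$, and justify this via Lemma~\ref{mutation again} by checking $\Hom_{\X}(V'',V')=0$. (Your one-summand-at-a-time ordering is unnecessary extra bookkeeping --- Lemma~\ref{mutation again} applies to the block decomposition $V'=\bigoplus_{\z\in J}\pi(U^{\z})$ all at once, so the Homs from already-shifted summands never need to be considered --- but your Serre-duality argument for them is correct, so this costs nothing.)

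The genuine gap is that the two Hom-vanishing statements you defer are not routine consequences of ``the concrete description'' or of the ``calibration'' of $\mathcal{S}$; they are the entire technical content of the proof, and your sketched route for the second one would not go through as stated. For $\Hom_{\X}(\pi(U^{\vell}),\pi(U^{\z}))=0$ when $\vell\not\ge\z$ (the paper's Proposition~\ref{lem2}), the argument requires the four-term locally free resolution of Proposition~\ref{2-extension bundle}; in the critical subcase both $\bigoplus_i\Hom(\OO(2\c-\vell+\ell_j\x_j),\OO(2\c-\z+z_i\x_i))$ and $\Hom(\pi(U^{\vell}),\OO(3\c-\z))$ are one-dimensional rather than zero, and one must show the induced map $\gamma_*$ between them is an isomorphism --- the vanishing is a cancellation, not a degreewise triviality. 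For the line bundles, your identification $\Hom_{\X}(\OO(\x),\pi(U^{\z_j}))\cong (U^{\z_j})_{-\x}$ is formally correct, but $U^{\z_j}=\rho(R/(X_i^{\ell_i}))$ is only the stable image of a \emph{finite-dimensional} module under the composite through $\DDD^{\L}_{\rm sg}(R)$, so its graded components as an actual Cohen--Macaulay module are not readable from $R/(X_i^{\ell_i})$; one again needs the resolution (\ref{org 2-ext bundle exact}), and in the case $\lambda=-1$ of $\x\in\mathcal{S}$ the relevant component does \emph{not} vanish termwise --- the two outer Hom-spaces are each isomorphic to $k$ and one must again verify that $\gamma_*$ is an isomorphism. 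Since you explicitly flag both computations as ``the main obstacle'' without resolving them, the proposal is a correct plan rather than a proof.
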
	
		
		 To prove Theorem \ref{main theorem_1}, we need the following observations. By  a similar argument  of \cite[Lemma 3.5]{I3}, we have the following lemma.  Here, $\add X$ denotes the full subcategory of $\coh \X$ consisting of direct summands of finite direct sums of $X$.
		\begin{lemma}\label{applying hom} Let $X\in \coh \X$ and $$0\to X_0 \to X_1\to X_2\to X_{3} \to 0$$
			an exact sequence in $\coh \X$ with $X_i\in \add X$.
			\begin{itemize}			
				\item[(a)]   If $W\in \coh\X$ satisfies $\Ext_{}^{1}(W,X)=0$, then there exists an exact sequence
				$$0\to (W,X_{0})\to\cdots\to(W,X_{3})\to {}^2(W,X_{0})\to\cdots\to{}^2(W,X_{3})\to 0.$$
				\item[(b)] If $Y\in \coh\X$ satisfies $\Ext_{}^{1}(X,Y)=0$, then there exists an exact sequence
				$$0\to(X_3,Y)\to\cdots\to(X_0,Y) \to{}^2(X_3,Y)\to\cdots\to{}^2(X_0,Y)\to0.$$
			\end{itemize}
		\end{lemma}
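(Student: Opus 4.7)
The plan is a standard splice of two long exact sequences coming from splitting the four-term exact sequence at its middle. Let $Y := \im(X_1 \to X_2) = \ker(X_2 \to X_3)$, so the given four-term sequence decomposes into two short exact sequences
\[ 0 \to X_0 \to X_1 \to Y \to 0 \quad\text{and}\quad 0 \to Y \to X_2 \to X_3 \to 0 \]
in $\coh \X$. The two crucial global ingredients I will use are: first, that $\coh \X$ has global dimension $d = 2$ (stated in the excerpt after the definition of sheafification), so $\Ext^i(W,-) = 0$ and $\Ext^i(-,Y) = 0$ for all $i \ge 3$; and second, that the hypothesis $\Ext^1(W, X) = 0$ combined with $X_i \in \add X$ yields $\Ext^1(W, X_i) = 0$ for $i = 0,1,2,3$ (and dually for part (b)).

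For part (a), I would apply $\Hom(W, -)$ to each of the two short exact sequences. From the first one, the vanishing of $\Ext^1(W, X_0)$ and $\Ext^1(W, X_1)$ forces the short sequence
\[ 0 \to (W, X_0) \to (W, X_1) \to (W, Y) \to 0 \]
to be exact, and leaves the tail
\[ 0 \to {}^1(W, Y) \to {}^2(W, X_0) \to {}^2(W, X_1) \to {}^2(W, Y) \to 0. \]
From the second short exact sequence, the vanishing of $\Ext^1(W, X_2)$ and $\Ext^1(W, X_3)$ gives
\[ 0 \to (W, Y) \to (W, X_2) \to (W, X_3) \to {}^1(W, Y) \to 0 \]
together with $0 \to {}^2(W, Y) \to {}^2(W, X_2) \to {}^2(W, X_3) \to 0$. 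Splicing the first pair through $(W, Y)$ produces exactness at $(W, X_1)$, $(W, X_2)$, $(W, X_3)$; splicing $(W, X_3) \to {}^1(W, Y) \hookrightarrow {}^2(W, X_0)$ and then ${}^2(W, X_1) \twoheadrightarrow {}^2(W, Y) \hookrightarrow {}^2(W, X_2)$ yields exactness at each remaining spot. This delivers the desired eight-term sequence.

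Part (b) is entirely dual: apply $\Hom(-, Y)$ to the two short exact sequences, use $\Ext^1(X_i, Y) = 0$ from $X_i \in \add X$ and $\Ext^1(X,Y)=0$, and use $\gl \coh \X = 2$ to kill all $\Ext^{\geq 3}(-,Y)$. Splicing the contravariant long exact sequences at $(Y, Y)$ in the analogous way (now with arrows reversed so that $(X_3, Y)$ appears on the left and $(X_0, Y)$ on the right) produces the second sequence. There is no real obstacle here; the only point requiring mild care is checking that the connecting maps coming from the two short exact sequences glue correctly at the $\Ext^1(W,Y)$ (respectively $\Ext^1(X_3 \to X_0, Y)$) junction, but this is automatic from the naturality of the connecting homomorphism together with the definition of $Y$.
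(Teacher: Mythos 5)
Your argument is correct and is essentially the intended one: the paper gives no written proof, deferring to \cite[Lemma 3.5]{I3}, whose argument is precisely this decomposition of the four-term sequence into two short exact sequences, truncation of the resulting long exact sequences via $\Ext^{1}(W,X_i)=0$ (dually $\Ext^{1}(X_i,Y)=0$) together with $\gl\coh\X=2$, and splicing through the image term. One cosmetic fix: rename the image of $X_1\to X_2$ (say to $K$), since calling it $Y$ collides with the test object $Y$ of part (b) and makes expressions such as $(Y,Y)$ ambiguous there.
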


		The following  proposition  was shown by a general result in \cite[Theorem 5.1]{CRW}.
		Here, we put $(p_1, p_2, p_3, p_4):=(2,2,p,q)$.
		
		\begin{proposition}\label{2-extension bundle} For each $\vell \in [\s, \s+\vdelta]$, there exists an 
			exact sequence 
			\begin{equation}\label{org 2-ext bundle exact}
				\eta: \quad	0 \to \OO \xrightarrow{} 
				\pi(U^{\vell}) 
				\xrightarrow{} 
				\bigoplus_{1\le i \le 4}\OO(2\c-\vell+\ell_{i}\x_i) \xrightarrow{\gamma} \OO(3\c-\vell) \to 0
			\end{equation}
			in $\coh \X$, where $\vell:=\sum_{i=1}^{4} \ell_i\x_i$ and $\gamma:=(X_i^{p_i-\ell_i})_{1\le i\le 4}$.
		\end{proposition}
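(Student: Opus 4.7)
The plan is to realise the $4$-term sequence $\eta$ as a Yoneda splicing of two short exact sequences, and to identify the middle term with $\pi(U^{\vell})$ via the hypersurface structure of $R$.

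First, I would establish that $\gamma$ is surjective in $\coh\X$. Its cokernel in $\mod^{\L}R$ is the shift $(R/(X_i^{p_i-\ell_i}\mid 1\le i\le 4))(3\c-\vell)$, and the hypothesis $\vell\in[\s,\s+\vdelta]$ forces $p_i-\ell_i\ge 1$ for every $i$, so the ideal $(X_i^{p_i-\ell_i})$ is primary with respect to the graded maximal ideal $\mathfrak{m}=(X_1,\ldots,X_4)$. Hence the cokernel is finite-dimensional over $k$, annihilated by sheafification, and $\gamma$ is surjective in $\coh\X$ with some kernel $N$.

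Next, I would describe $N$ explicitly by lifting syzygies to $S=k[X_1,\ldots,X_4]$. Since $(X_i^{p_i-\ell_i})$ is a regular sequence in $S$, the kernel of the map $\bigoplus_i S\xrightarrow{(X_i^{p_i-\ell_i})}S$ is generated by the Koszul syzygies. Passing to $R=S/(f)$, the defining identity $f=\sum_i X_i^{\ell_i}X_i^{p_i-\ell_i}=0$ produces an additional \emph{matrix-factorisation syzygy} $v=(X_1^{\ell_1},X_2^{\ell_2},X_3^{\ell_3},X_4^{\ell_4})$. A standard lifting argument (reduce any kernel element modulo $f$, subtract an $R$-multiple of $v$, and invoke $S$-regularity) shows that Koszul syzygies and $v$ together generate $N$ as an $R$-module, and it is $v$ that will encode the non-trivial extension class.

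Finally, I would identify $\pi(U^{\vell})$ with the extension of $N$ by $\OO$ determined by $v$. Since $R$ is a hypersurface, $U^{\vell}=\rho(R/(X_i^{\ell_i}))$ admits an explicit description through the Koszul matrix factorisation of $f$ along the regular sequence $(X_i^{\ell_i})$; in this description the syzygy $v$ simultaneously produces the inclusion $\OO\hookrightarrow\pi(U^{\vell})$ and exhibits the quotient as $N$, and Proposition~\ref{relationship} lets me translate the relevant $\Ext$ computations between $\mod^{\L}R$ and $\coh\X$. Splicing the resulting two short exact sequences then recovers $\eta$. The main obstacle is precisely this last identification, which must bridge the abstract definition $U^{\vell}=\rho(R/(X_i^{\ell_i}))$ with the concrete middle sheaf of the spliced sequence; in the general GL-projective-space framework this is the content of \cite[Theorem~5.1]{CRW}, and our proposition is its specialisation to $d=2$, $n=4$, weight type $(2,2,p,q)$.
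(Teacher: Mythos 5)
The paper offers no internal proof of this proposition at all: it simply invokes the general result \cite[Theorem 5.1]{CRW}, which is exactly where your own argument ultimately lands for the key step of identifying the middle term of the spliced sequence with $\pi(U^{\vell})$. Your preliminary steps — surjectivity of $\gamma$ in $\coh\X$ because its cokernel $(R/(X_i^{p_i-\ell_i}\mid 1\le i\le 4))(3\c-\vell)$ is finite-dimensional, and the description of the syzygies of $(X_i^{p_i-\ell_i})$ over the hypersurface $R$ as Koszul syzygies together with the matrix-factorisation syzygy $v=(X_1^{\ell_1},\dots,X_4^{\ell_4})$ — are correct and consistent with how that cited theorem is proved, so your proposal matches (and in fact elaborates on) the paper's approach.
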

		
	Recall from  \cite[Proposition 5.19(b)]{HIMO} that for any $\x\in\L$ and $\vell\in [\s, \s+\vdelta]$,  $$\Ext^{1}(\pi(U^{\vell}),\OO(\x))=0 \text{ and } \Ext^{1}(\OO(\x),\pi(U^{\vell}))=0.$$

	\begin{proposition} \label{lem2}
		For any $\s \le \vell, \z \le \s+\vdelta$ with $\vell \not\ge \z$, then 
		$\Hom(\pi(U^{\vell}), \pi(U^{\z}))=0.$ 		
	\end{proposition}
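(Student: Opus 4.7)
The natural plan is to feed the $2$-extension exact sequence of Proposition~\ref{2-extension bundle} for $\pi(U^{\vell})$ into $\Hom_{\X}(-, \pi(U^{\z}))$ and read off the vanishing from the resulting long exact sequence. Concretely, I would split the four-term sequence (\ref{org 2-ext bundle exact}) as
\[
0 \to \OO \to \pi(U^{\vell}) \to K \to 0 \quad\text{and}\quad 0 \to K \to \bigoplus_{i=1}^{4} \OO(2\c-\vell+\ell_i\x_i) \to \OO(3\c-\vell) \to 0,
\]
where $K$ is the common image/kernel. Since $\Ext^{1}(\OO(\x), \pi(U^{\z})) = 0$ for every $\x \in \L$ (the remark preceding the statement), applying $\Hom(-, \pi(U^{\z}))$ to the second short exact sequence and combining with the first produces the exact sequence
\[
0 \to \Hom(K, \pi(U^{\z})) \to \Hom(\pi(U^{\vell}), \pi(U^{\z})) \to \Hom(\OO, \pi(U^{\z})),
\]
with $\Hom(K, \pi(U^{\z}))$ identified as the cokernel of an explicit map built from the Hom-spaces $\Hom(\OO(\y), \pi(U^{\z}))$ for $\y \in \{3\c-\vell\} \cup \{2\c-\vell+\ell_i\x_i \mid 1 \le i \le 4\}$.

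Next I would compute each $\Hom(\OO(\y), \pi(U^{\z}))$ by applying $\Hom(\OO(\y), -)$ to the analogous four-term exact sequence for $\pi(U^{\z})$. Using $\Ext^{1}(\OO(\y), \OO(\x)) = 0$ from (\ref{extension spaces between line bundles}) together with $\Ext^{1}(\OO(\y), \pi(U^{\z})) = 0$, Lemma~\ref{applying hom}(a) expresses these Hom-spaces in terms of graded components $R_{\y' - \y}$, where $\y'$ ranges over $\{0, 2\c-\z+z_j\x_j, 3\c-\z\}$, together with their $k$-duals $D(R_{\cdot})$. In this way the vanishing of $\Hom(\pi(U^{\vell}), \pi(U^{\z}))$ is reduced entirely to a question about homogeneous components of $R$, which is controlled by the criterion $R_{\x} \neq 0 \iff \x \ge 0$ and the normal-form presentation (\ref{equ:nor}).

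Finally I would feed in the hypothesis. Since $\vell, \z \in [\s, \s+\vdelta]$ we have $\ell_1 = z_1 = \ell_2 = z_2 = 1$, so that $\vell \not\ge \z$ is equivalent to $\ell_3 < z_3$ or $\ell_4 < z_4$. A degree-by-degree inspection then shows that the relevant graded components $R_{\y' - \y}$ entering the cokernel description of $\Hom(K, \pi(U^{\z}))$ either vanish outright or cancel in the cokernel, and that the residual connecting map $\Hom(\pi(U^{\vell}), \pi(U^{\z})) \to \Hom(\OO, \pi(U^{\z}))$ is zero; putting both together yields the claimed vanishing.

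The main obstacle is the combinatorial bookkeeping in this last step: one must systematically verify, for every pair $(\y, \y')$ of twists that appears, that the normal form of $\y' - \y$ has non-positive $\c$-coefficient whenever $\ell_3 < z_3$ or $\ell_4 < z_4$, and track this through the cokernel and the connecting map. The cleanest way is probably to extract a small lemma showing that the hypothesis forces $R_{(2\c-\vell+\ell_i\x_i)-(2\c-\z+z_j\x_j)} = 0$ for all relevant $(i,j)$, reducing the verification to a finite case analysis on the weights $(p_3, p_4) = (p, q)$.
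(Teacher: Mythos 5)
Your overall framework is the mirror image of the paper's: the paper splits the four-term sequence for the \emph{target} $\pi(U^{\z})$ and applies $\Hom(\pi(U^{\vell}),-)$, whereas you split the sequence for the \emph{source} $\pi(U^{\vell})$ and apply $\Hom(-,\pi(U^{\z}))$. This asymmetry matters, and it is where your argument has a genuine gap. In the paper's decomposition the two outer terms sandwiching $\Hom(\pi(U^{\vell}),\pi(U^{\z}))$ are $\Hom(\pi(U^{\vell}),\OO)$ and $\Hom(\pi(U^{\vell}),K)$, and both are shown to \emph{vanish} (the first via the connecting map $\Hom(\OO,\OO)\to\Ext^2(\OO(3\c-\vell),\OO)$ being an isomorphism, which uses Auslander--Reiten--Serre duality). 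In your decomposition the corresponding outer term is $\Hom(\OO,\pi(U^{\z}))$, which is \emph{not} zero: it is one-dimensional, spanned by the canonical section $\OO\to\pi(U^{\z})$ coming from the leftmost map of the sequence (\ref{org 2-ext bundle exact}) for $\z$. So you cannot conclude by showing both outer terms vanish; you must instead show that the restriction map $\Hom(\pi(U^{\vell}),\pi(U^{\z}))\to\Hom(\OO,\pi(U^{\z}))$ has zero image, i.e.\ that the canonical section of $\pi(U^{\z})$ does not factor through $\pi(U^{\vell})$. By exactness this is equivalent to the injectivity of the connecting map $\Hom(\OO,\pi(U^{\z}))\to\Ext^1(K,\pi(U^{\z}))$, which forces you into computing $\Ext^1(K,\pi(U^{\z}))\subseteq\Ext^2(\OO(3\c-\vell),\pi(U^{\z}))$ via Serre duality --- none of which appears in your outline. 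Your sketch instead asserts that ``the residual connecting map $\Hom(\pi(U^{\vell}),\pi(U^{\z}))\to\Hom(\OO,\pi(U^{\z}))$ is zero'' by degree considerations; but that map is not a connecting map, its target is nonzero, and (once $\Hom(K,\pi(U^{\z}))=0$ is known) its vanishing is literally equivalent to the statement you are trying to prove, so no bookkeeping of graded components $R_{\y'-\y}$ can establish it directly.

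The rest of your plan (computing $\Hom(K,\pi(U^{\z}))$ as a cokernel of maps between spaces $\Hom(\OO(\y),\pi(U^{\z}))$, each reduced to graded pieces of $R$ via the resolution of $\pi(U^{\z})$ and the vanishing $\Ext^1(\OO(\x),\pi(U^{\z}))=0$) is sound and parallels the paper's Step 2, and your observation that $\ell_1=z_1=\ell_2=z_2=1$ correctly localizes the hypothesis $\vell\not\ge\z$ to the last two coordinates. To repair the argument you should either (a) supply the missing Serre-duality computation showing $\Hom(\OO,\pi(U^{\z}))\to\Ext^1(K,\pi(U^{\z}))$ is injective, or (b) switch to the paper's decomposition of the sequence for $\pi(U^{\z})$, where the analogous outer term $\Hom(\pi(U^{\vell}),\OO)$ genuinely vanishes and the duality argument is a one-line comparison of one-dimensional spaces.
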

	\begin{proof} By Proposition \ref{2-extension bundle}, there exists an  exact sequence
		\begin{align*}
			\eta_{\vell}: & \quad	0 \to \OO \xrightarrow{} 
		\pi(U^{\vell}) 
		\xrightarrow{} 
		\bigoplus_{1\le i \le 4}\OO(2\c-\vell+\ell_{i}\x_i) \xrightarrow{\gamma_{\vell}} \OO(3\c-\vell) \to 0,	
		\end{align*}
		where $\vell:=\sum_{i=1}^{4}\ell_i\x_i$ and $\gamma_{\vell}:=(X_i^{p_i-\ell_i})_{1\le i\le 4}$. Write $\z:=\sum_{i=1}^{4}z_i\x_i$. 
	We factor the sequence  $\eta_{\z}$ into the two short exact sequences
	\begin{align*} 
		\eta_{\z,1}: \quad	0 &\to \OO \to \pi(U^{\z}) \to K \to 0,\\
		\eta_{\z,2}: \quad	0&\to K \to \bigoplus_{1\le i\le 4}\OO(2\c-\z+z_{i}\x_i) \xrightarrow{\gamma_{\z}} \OO(3\c-\z) \to 0.
	\end{align*}	The proof is divided into the following three steps.
		
			\emph{Step $1$}: We show that $\Hom(\pi(U^{\vell}),\OO)=0$. By Lemma \ref{applying hom}, applying $\Hom(-,\OO)$ to the sequence $\eta_{\vell}$ and obtain an exact sequence
		\begin{align} \label{equ 1.}
			\bigoplus_{1\le i\le 4}(\OO(2\c-\vell+\ell_{i}\x_i), \OO)
			\to (\pi(U^{\vell}),\OO) \to (\OO,\OO)\xrightarrow{\partial} {}^{2}(\OO(3\c-\vell),\OO).
		\end{align}
		Since $-2\c+\vell-\ell_{i}\x_i \le 2\c+\w$, we have that the leftmost term in (\ref{equ 1.})  is zero. Using Auslander-Reiten-Serre  duality,  we have $$\Ext^{2}(\OO(3\c-\vell),\OO)=D\Hom(\OO,\OO(3\c-\vell+\w))=k.$$ 
		Since $\Hom(\OO,\OO)=k$, the  connecting morphism $\partial$ is nonzero, which implies that $\partial$ is an isomorphism. Thus we have $\Hom(\pi(U^{\vell}),\OO)=0.$
		
			\emph{Step $2$}:	We show that $\Hom(\pi(U^{\vell}),K)=0$.		
		 By Lemma \ref{applying hom}, for $1\le i \le 4$,  we apply $\Hom(-,\OO(2\c-\z+z_{i}\x_i))$ to the sequence $\eta_{\vell}$ and obtain an exact sequence
		\begin{align*}
			0 &\to (\OO(3\c-\vell),\OO(2\c-\z+z_{i}\x_i)) \to \bigoplus_{1\le j\le 4}(\OO(2\c-\vell+\ell_{j}\x_j), \OO(2\c-\z+z_{i}\x_i))\\ 
			&\to (\pi(U^{\vell}),\OO(2\c-\z+z_{i}\x_i)) \to (\OO,\OO(2\c-\z+z_{i}\x_i)).
		\end{align*} 
		Clearly, we have $\Hom(\OO,\OO(2\c-\z+z_{i}\x_i))=0$. By our assumption $\vell \not\ge \z$, we have $\vell- \z \le 2\c+\w$ by (\ref{two possibilities of x}) and thus 
		$2\c-\z+z_{i}\x_i-3\c+\vell\le 2\c+\w$, implying $\Hom(\OO(3\c-\vell),\OO(2\c-\z+z_{i}\x_i))=0$. 		 This implies that
		\begin{align} \label{align 2}
			(\pi(U^{\vell}),\OO(2\c-\z+z_{i}\x_i)) \simeq	\bigoplus_{1\le j\le 4} (\OO(2\c-\vell+\ell_{j}\x_j), \OO(2\c-\z+z_{i}\x_i)).
		\end{align} 
		Applying $\Hom(\pi(U^{\vell}),-)$ to  $\eta_{\z,2}$, we have an exact sequence
		\begin{align*} 
			0 \to (\pi(U^{\vell}),K) \to \bigoplus_{1\le i\le 4}(\pi(U^{\vell}),\OO(2\c-\z+z_{i}\x_i)) \xrightarrow{\gamma_{\ast}} (\pi(U^{\vell}),\OO(3\c-\z)).
		\end{align*}
	    Notice that the inequality $2\c-\z+z_{i}\x_i-2\c+\vell-\ell_{j}\x_j \ge 0$, which 
		 is equivalent to  $$\sum_{t\neq j}(\ell_t-z_t)\x_t+z_i\x_i-z_j\x_j \ge 0.$$ This holds if and only if $i=j$ and $\ell_t\ge z_t$ for any $t \neq j$. 
		Thus by (\ref{align 2}), we have
		\begin{align} \label{align 3}
			\bigoplus_{1\le i\le 4}(\pi(U^{\vell}),\OO(2\c-\z+z_{i}\x_i))\neq 0
		\end{align}
		 if and only if  there exists a subset $I$ of $H:=\{1,\dots,4\}$ with $|I|=3$ such that $\ell_i\ge z_i$ for any $i \in I$ and $\ell_j <z_j$ for $j\in H\setminus I$ since $\vell \not\ge \z$. In this case, it is isomorphic to $k$.	We may assume that (\ref{align 3}) holds, otherwise we have done.	
		Applying $\Hom(-,\OO(3\c-\z))$ to the sequence $\eta_{\vell}$, we obtain an exact sequence  
		\begin{align*}
			0 &\to (\OO(3\c-\vell),\OO(3\c-\z)) \to \bigoplus_{1\le i\le 4}(\OO(2\c-\vell+\ell_{i}\x_i), \OO(3\c-\z))\\ 
			&\to (\pi(U^{\vell}),\OO(3\c-\z)) \to (\OO,\OO(3\c-\z))=0.
		\end{align*}
		Since $\vell\not\ge\z$, we have $\Hom(\OO(3\c-\vell),\OO(3\c-\z))=0$, implying 
		\begin{align} \label{equ2.}
			\Hom(\pi(U^{\vell}),\OO(3\c-\z)) \simeq	\bigoplus_{1\le i\le 4} \Hom(\OO(2\c-\vell+\ell_{i}\x_i), \OO(3\c-\z)).
		\end{align}
		By the assumption, 	there exists a subset $I$ of $H=\{1,\dots,4\}$ with $|I|=3$ such that $\ell_i=z_i$ for any $i \in I$ and $\ell_j <z_j$ for $j\in H\setminus I$. In this case, there is exactly one $i$ such that $3\c-\z-2\c+\vell-\ell_{i}\x_i\ge 0$ and then (\ref{equ2.}) is isomorphic to $k$. Thus the
		nonzero morphism $\gamma_{\ast}$ between one dimensional spaces is an isomorphism, implying that $\Hom(\pi(U^{\vell}), K)=0$.
		
		\emph{Step $3$}: Applying $\Hom(\pi(U^{\vell}),-)$ to  $\eta_{\z,1}$, we  obtain an exact sequence 
		$$0 \to (\pi(U^{\vell}), \OO) \to (\pi(U^{\vell}), \pi(U^{\z})) \to (\pi(U^{\vell}), K)\to 0.$$
		By Step $1$ and $2$, we have $\Hom(\pi(U^{\vell}), \pi(U^{\z}))=0$ and this finishes the proof.
	\end{proof}

	Now we are ready to prove Theorem \ref{main theorem_1}.	
		
	\begin{proof}[Proof of Theorem~\rm\ref{main theorem_1}] 	
	First we claim that  $\Hom(\OO(\x),\pi(U^{\vell}))=0$ holds for any  $\x\in \mathcal{S}$ and $\vell \in [\s,\s+\vdelta]$. By Lemma \ref{applying hom},  applying $\Hom(\OO(\x),-)$ to the exact sequence (\ref{org 2-ext bundle exact}), we obtain an exact sequence
	$$  (\OO(\x),\OO) \to (\OO(\x),\pi(U^{\vell})) \to \bigoplus_{1\le i\le 4}(\OO(\x),\OO(2\c-\vell+\ell_{i}\x_i)) \xrightarrow{\gamma_{\ast}} (\OO(\x),\OO(3\c-\vell)).$$
	Write $\x=\sum_{i=1}^{n}\lambda_i\x_i+\lambda\c$ in the normal form. Note that we have $-1\le \lambda \le 1$.  Concerning the value of $\lambda$, the proof of the claim is divided into the  three cases.
	
	\emph{Case $1$}: In the case $\lambda=1$, and thus $\x=\c$. By (\ref{two possibilities of x}), it is straightforward to check that $\Hom(\OO(\c),\OO)=0$ and $\Hom(\OO(\c),\OO(2\c-\vell+\ell_{i}\x_i))=0$ for each $1\le i\le 4$, which implies that $\Hom(\OO(\x),\pi(U^{\vell}))=0$.
	
	\emph{Case $2$}: In the case $\lambda=0$, and thus $\x=\sum_{i=1}^{4}\lambda_i\x_i$. Since $\x \in \mathcal{S}$, we have 
	$0 < \# \{1\le i \le 4\mid\lambda_i\ge1\}$, yielding $\x >0$ and thus $\Hom(\OO(\x),\OO)=0$. On the other hand, since $2\c-\vell+\ell_{i}\x_i -\x \le 2\c+\w$, we have $\Hom(\OO(\x),\OO(2\c-\vell+\ell_{i}\x_i))=0$ for each $1\le i\le 4$. This implies that 
	$\Hom(\OO(\x),\pi(U^{\vell}))=0$.
	
	\emph{Case $3$}: In the case $\lambda=-1$, and thus $\x=\sum_{i=1}^{4}\lambda_i\x_i-\c$.  
	Since $\x \in \mathcal{S}$, we have $2 < \# \{1\le i \le 4\mid\lambda_i\ge1\}$, which yields that $2\c+\sum_{i=1}^{4}(\lambda_i-1)\x_i \ge 0$. Thus we have $-\x\le 2\c+\w$, implying $\Hom(\OO(\x),\OO)=0$. On the other hand, if $2\c-\vell+\ell_{i}\x_i-\x \not\ge 0$ holds for all $1\le i \le 4$, then  $\Hom(\OO(\x),\OO(2\c-\vell+\ell_{i}\x_i))=0$ for $1\le i \le 4$ and thus $\Hom(\OO(\x),\pi(U^{\vell}))=0$. 			
	Otherwise 
	$2\c-\vell+\ell_{t}\x_t-\x \ge 0$ holds for some $1\le t \le 4$, by a easy calculation, which is equivalent to 
	\begin{align*} \label{align 1}
		\sum_{j\neq t} (\ell_j+\lambda_j)\x_j+\lambda_t\x_t \le 3\c,
	\end{align*}
	yielding $\lambda_t=0$ and $\lambda_j+\ell_j\le p_j$ for any $j\neq t$. 
	 Since $2 < \# \{1\le i \le 4\mid\lambda_i\ge1\}$, this implies that  $\lambda_j \ge 1$ for any $j\neq t$. Therefore  $2\c-\vell+\ell_{i}\x_i-\x \ge 0$ is only possible for $i=t$. 
	In this case, we have
	$$\bigoplus_{1\le i\le 4}\Hom(\OO(\x),\OO(2\c-\vell+\ell_{i}\x_i))=k \ \text{ and } \  \Hom(\OO(\x), \OO(3\c-\vell))=k,$$
	which shows that the nonzero morphism $\gamma_{\ast}$ is an isomorphism. This also implies that $\Hom(\OO(\x),\pi(U^{\vell}))=0$ and thus the claim follows.
	
	Combining the preceding claim and Proposition \ref{lem2}, we have that for any $\x \in \mathcal{S}$, $\vell\in I$ and $\z \in J$, $\Hom(\pi(U^{\vell})\oplus \OO(\x), \pi(U^{\z}))=0.$ Hence the  assertion follows immediately from Lemma \ref{mutation again}.
	\end{proof} 	
	 
	 As an application, we find some  non-examples of  Problem \ref{problem}. From now on, we focus on a GL hypersurface $(R,\L)$ of  type $(2,2,2,q)$ or $(2,2,3,3)$, where $q \ge 2$.   Such a GL hypersurface is of Cohen-Macaulay finite type, as discussed in \cite{HIMO}.
	 
	 \subsection*{Case $(2,2,2,q)$} In this case, there exists a triangle equivalence
	 \[ \underline{\CM}^{\L} R  \simeq \DDD^{\bo}(\mod k \mathbb A_{q-1}).\]
	 Identify objects in $\underline{\CM}^{\L}R$ with objects in ${\CM}^{\L}R$ without nonzero projective direct summands. By adding all indecomposable projective modules $R(\x)$ for $\x \in \L$ and arrows connected to them into the Auslander-Reiten quiver  ${\mathfrak A}(\underline{\CM}^{\L} R)$, we obtain the following Auslander-Reiten quiver ${\mathfrak A}({\CM}^{\L} R)$, see \cite[Theorems 4.13, 4.46]{HIMO}. 
		\[ \resizebox{\textwidth}{!}{
			\begin{xy} 0;<16pt,0pt>:<0pt,16pt>::
			(16,8) *+{\cdot} ="42",
			(18,10) *+{\bullet} ="52",
			(16,4) *+{} ="23",
			(16.4,4.5) *+{\mdots} ="x",
			(17.2,5.4) *+{\mdots} ="y",
			(18,6) *+{} ="33",
			(20,8) *+{\cdot} ="43",
			(22,10) *+{\bullet} ="53",
			(16,0) *+{\bullet} ="04",
			(18,2) *+{\cdot} ="14",
			(20,4) *+{} ="24",
			(20.4,4.5) *+{\mdots} ="x",
			(21.2,5.4) *+{\mdots} ="y",
			(22,6) *+{} ="34",
			(24,8) *+{\cdot} ="44",
			(26,10) *+{\bullet} ="54",
			(20,0) *+{\bullet} ="05",
			(22,2) *+{\cdot} ="15",
			(24,4) *+{} ="25",
			(24.4,4.5) *+{\mdots} ="x",
			(25.2,5.4) *+{\mdots} ="y",
			(26,6) *+{} ="35",
			(28,8) *+{\cdot} ="45",
			(30,10) *+{\bullet} ="55",
			(24,0) *+{(0)} ="06",
			(26,2) *+{\langle 0,0 \rangle} ="16",
			(28,4) *+{} ="26",
			(28.4,4.5) *+{\mdots} ="x",
			(29.2,5.4) *+{\mdots} ="y",
			(30,6) *+{} ="36",
			(32,8) *+{\langle q-2, 0 \rangle} ="46",
			(34,10) *+{(\x_{3})} ="56",
			(28,0) *+{(\x_4)} ="07",
			(30,2) *+{\langle 0,1 \rangle} ="17",
			(32,4) *+{} ="27",
			(32.4,4.5) *+{\mdots} ="x",
			(33.2,5.4) *+{\mdots} ="y",
			(34,6) *+{} ="37",
			(36,8) *+{\langle q-2, 1 \rangle} ="47",
			(38,10) *+{(\x_{3}+\x_4)} ="57",
			(32,0) *+{\bullet} ="08",
			(34,2) *+{\cdot} ="18",
			(36,4) *+{} ="28",
			(36.4,4.5) *+{\mdots} ="x",
			(37.2,5.4) *+{\mdots} ="y",
			(38,6) *+{} ="38",
			(40,8) *+{\cdot} ="48",
			(42,10) *+{\bullet} ="58",
			(36,0) *+{\bullet} ="09",
			(38,2) *+{\cdot} ="19",
			(40,4) *+{} ="29",
			(40.4,4.5) *+{\mdots} ="x",
			(41.2,5.4) *+{\mdots} ="y",
			(42,6) *+{} ="39",
			(44,8) *+{\cdot} ="49",
			(46,10) *+{\bullet} ="59",
			(40,0) *+{\bullet} ="010",
			(42,2) *+{\cdot} ="110",
			(44,4) *+{} ="210",
			(44.4,4.5) *+{\mdots} ="x",
			(45.2,5.4) *+{\mdots} ="y",
			(46,6) *+{} ="310",
			(44,0) *+{\bullet} ="011",
			(46,2) *+{\cdot} ="111",
			"42", {\ar"52"},
			"42", {\ar"33"},
			"52", {\ar"43"},
			"33", {\ar"43"},
			"43", {\ar"53"},
			"23", {\ar"14"},
			"43", {\ar"34"},
			"53", {\ar"44"},
			"04", {\ar"14"},
			"14", {\ar"24"},
			"34", {\ar"44"},
			"44", {\ar"54"},
			"14", {\ar"05"},
			"24", {\ar"15"},
			"44", {\ar"35"},
			"54", {\ar"45"},
			"05", {\ar"15"},
			"15", {\ar"25"},
			"35", {\ar"45"},
			"45", {\ar"55"},
			"15", {\ar"06"},
			"25", {\ar"16"},
			"45", {\ar"36"},
			"55", {\ar"46"},
			"06", {\ar"16"},
			"16", {\ar"26"},
			"36", {\ar"46"},
			"46", {\ar"56"},
			"16", {\ar"07"},
			"26", {\ar"17"},
			"46", {\ar"37"},
			"56", {\ar"47"},
			"07", {\ar"17"},
			"17", {\ar"27"},
			"37", {\ar"47"},
			"47", {\ar"57"},
			"17", {\ar"08"},
			"27", {\ar"18"},
			"47", {\ar"38"},
			"57", {\ar"48"},
			"08", {\ar"18"},
			"18", {\ar"28"},
			"38", {\ar"48"},
			"48", {\ar"58"},
			"18", {\ar"09"},
			"28", {\ar"19"},
			"48", {\ar"39"},
			"58", {\ar"49"},
			"09", {\ar"19"},
			"19", {\ar"29"},
			"39", {\ar"49"},
			"49", {\ar"59"},
			"19", {\ar"010"},
			"29", {\ar"110"},
			"49", {\ar"310"},
			"010", {\ar"110"},
			"110", {\ar"210"},
			"110", {\ar"011"},
			"210", {\ar"111"},
			%
			"011", {\ar"111"},
			\end{xy}
		} \]
	    	Here, 
	    	a vertex labeled $\langle i,j \rangle$ denotes the position of  $U^{\s+(q-2-i)\x_4}(j\x_4)$ for $0\le i \le q-2$ and $j\in\Z$. The 
	    	position of $R(\x)$ is denoted $(\x)$ or $\bullet$ in case $\x$ is not specified. The  
	    	positions of $R(\x)$ and  $R(\x+\vec{t}_{ab})$ are overlapping  for integers $1\le a < b \le 3$, where $\vec{t}_{ab}:=\x_a-\x_b$. The interpretation is that every arrow starting or ending at vertex $(\x)$ or $\bullet$ represents four arrows in the actual Auslander-Reiten quiver (each one connected to one of the $R(\x)$ and $R(\x+\vec{t}_{ab})$).

	 \begin{proposition} \label{pro1} Assume that $(R,\L)$ is given by weight type $(2,2,2,q)$ with $q\ge 2$.  Then  $M$  is not a $2$-tilting object  in $\underline{\CM}^{\L} R$ if and only if $J=[\s+m\x_4,\s+(q-2)\x_4]$ for some $1\le m \le q-2$.	 	
	 \end{proposition}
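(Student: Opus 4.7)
The plan is to reduce the problem to a single Hom-vanishing criterion in $\underline{\CM}^{\L}R$ and then read the answer off the Auslander--Reiten quiver displayed in the excerpt. Since every $R(\x)$ dies in $\underline{\CM}^{\L}R$, $\pi(M)$ is a $2$-tilting object there if and only if its non-projective part
\[
\bar{M}:=\Bigl(\bigoplus_{\vell\in I}U^{\vell}\Bigr)\oplus\Bigl(\bigoplus_{\z\in J}U^{\z}(-\w)\Bigr)
\]
is. By Proposition~\ref{pre-prosition}(a), $U^{\rm CM}$ is a $2$-tilting object of $\underline{\CM}^{\L}R$, and by \cite[Proposition~4.28]{HIMO} it is a slice in the induced $2$-cluster tilting subcategory $\underline{\UU}:=\add\{U^{\rm CM}(\ell\w)\mid\ell\in\Z\}$. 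Since $\bar M\in\underline{\UU}$ and its indecomposable summands meet each $\w$-orbit of $\underline{\UU}$ exactly once (as $I\sqcup J=[\s,\s+\vdelta]$), the same correspondence tells us that $\bar M$ is a $2$-tilting object in $\underline{\CM}^{\L}R$ if and only if it is a slice in $\underline{\UU}$, for which only condition (b) of Definition~\ref{def.slice} is non-trivial, namely $\underline{\Hom}(\bar M,\bar M(\ell\w))=0$ for all $\ell>0$.

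Expanding this Hom over the four types of summand pairs of $\bar M$, and using $\bar M(\ell\w)=\bigoplus_{\vell\in I}U^{\vell}(\ell\w)\oplus\bigoplus_{\z\in J}U^{\z}((\ell-1)\w)$, three of the four pieces reduce to $\underline{\Hom}(U^{\vec a},U^{\vec b}(k\w))$ with $k>0$, all of which vanish by the slice property of $U^{\rm CM}$. The only surviving piece comes from $\vell\in I,\ \z\in J$ at $\ell=1$, so the slice condition for $\bar M$ collapses to the single requirement
\[
\underline{\Hom}(U^{\vell},U^{\z})=0\quad\text{for all }\vell\in I\text{ and }\z\in J.
\]

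Specialising to weight type $(2,2,2,q)$: here $\vdelta=(q-2)\x_4$, so $[\s,\s+\vdelta]$ is a totally ordered chain, and its upsets are precisely the terminal segments $J_m:=[\s+m\x_4,\s+(q-2)\x_4]$ for $0\le m\le q-1$. For $m\in\{0,q-1\}$, $\bar M$ coincides with $U^{\rm CM}(-\w)$ or $U^{\rm CM}$, both of which are slices, handling the ``only if'' direction. For $1\le m\le q-2$, the key Hom claim is
\[
\underline{\Hom}(U^{\s+a\x_4},U^{\s+b\x_4})\neq 0\quad\text{whenever }0\le a\le b\le q-2,
\]
so that taking $\vell=\s+(m-1)\x_4\in I$ and $\z=\s+m\x_4\in J$ witnesses the failure and closes the ``if'' direction.

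The main obstacle is precisely this Hom nonvanishing claim. The cleanest route is to invoke the triangle equivalence $\underline{\CM}^{\L}R\simeq\DDD^{\bo}(\mod k\mathbb{A}_{q-1})$ underlying the Auslander--Reiten quiver shown in the excerpt: under this equivalence, $U^{\s+a\x_4}$ (located at the vertex $\langle q-2-a,0\rangle$) is identified with a specific indecomposable interval module on the linearly oriented $\mathbb{A}_{q-1}$, and the standard Hom formula between interval modules in $\DDD^{\bo}(\mod k\mathbb{A}_{q-1})$ delivers the claim. Alternatively, one can construct the required maps inductively from the short exact sequences
\[
0\to R/(X_1,X_2,X_3,X_4)(-a\x_4)\to R/(X_1,X_2,X_3,X_4^{a+1})\to R/(X_1,X_2,X_3,X_4^{a})\to 0
\]
relating successive $U^{\s+a\x_4}$'s in the singularity category, and then composing them.
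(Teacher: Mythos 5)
Your reduction breaks down at its very first step. You claim that $\bar M$ is a $2$-tilting object in $\underline{\CM}^{\L}R$ if and only if it is a slice in $\underline{\UU}$. Only the forward implication is available (via \cite[Theorem 4.53]{HIMO} and \cite[Proposition 4.28]{HIMO}); the converse is precisely what this proposition refutes. Indeed $\bar M$ \emph{is} always a slice in $\underline{\UU}$ --- that is essentially why $\pi(M)$ is a $2$-tilting bundle --- and the whole content of the statement is that for $1\le m\le q-2$ it is nevertheless not a tilting object, so the inclusion of tilting objects into slices is strict. The slice condition of Definition~\ref{def.slice} only controls $\underline{\Hom}(\bar M,\bar M(\ell\w))$ for $\ell>0$; it says nothing about $\underline{\Hom}(\bar M,\bar M[i])$ for $i<0$, and that is exactly where rigidity fails. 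The paper's argument for the ``if'' direction is: the irreducible morphism $U^{\s+m\x_4}\to U^{\s+(m-1)\x_4}$ induces an irreducible morphism $U^{\s+(m-1)\x_4}\to\tau^{-1}U^{\s+m\x_4}$, and since $\tau^{-1}=(-\w)[-1]$ here, this gives $\underline{\Hom}(U^{\s+(m-1)\x_4},U^{\s+m\x_4}(-\w)[-1])=k\neq 0$ with $\s+(m-1)\x_4\in I$ and $\s+m\x_4\in J$, i.e.\ a nonzero negative self-extension of $M$.

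Your fallback Hom computation is also wrong, in a way that masks the first error. You assert $\underline{\Hom}(U^{\s+a\x_4},U^{\s+b\x_4})\neq 0$ for all $a\le b$; in fact Proposition~\ref{lem2} combined with Proposition~\ref{relationship} gives $\underline{\Hom}(U^{\vell},U^{\z})=0$ whenever $\vell\not\ge\z$, so this space vanishes for $a<b$. The nonzero morphisms run the other way, from larger $\vell$ to smaller, as the chain $\langle 0,0\rangle\to\langle 1,0\rangle\to\cdots$ in the Auslander--Reiten quiver shows, since $\langle i,0\rangle=U^{\s+(q-2-i)\x_4}$. Had you computed this Hom correctly, your criterion would have (incorrectly) certified $M$ as a tilting object for every upset $J$; you reach the right conclusion only because two errors cancel. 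The ``only if'' part of your argument (the cases $J=\emptyset$ and $J=[\s,\s+\vdelta]$, where $\bar M$ is $U^{\rm CM}$ or $U^{\rm CM}(-\w)$) is fine and agrees with the paper.
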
	
	 \begin{proof}
	 First we assume for contradiction that $J=\emptyset$ or $[\s,\s+(q-2)\x_4]$.  
	By Proposition \ref{pre-prosition}, $U^{\rm CM}:= \bigoplus_{\s \le \vell \le \s+(q-2)\x_4} U^{\vell}$ is a $2$-tilting object in $\underline{\CM}^{\L} R$ and also $U^{\rm CM}(-\w)$ is a $2$-tilting object in $\underline{\CM}^{\L} R$.  
		
	On the other hand,  assume 
	$J=[\s+m\x_4,\s+(q-2)\x_4]$ for some  $1\le m \le q-2$ and thus $I=[\s,\s+(m-1)\x_4]$. 	
	Notice that the irreducible morphism $U^{\s+m\x_4} \to U^{\s+(m-1)\x_4}$ induces the irreducible morphism $U^{\s+(m-1)\x_4} \to \tau^{-1} U^{\s+m\x_4}$, where $\tau$ denotes the Auslander–Reiten translation. By \cite[Proposition 4.28]{HIMO}, 
	we have
	\begin{align*}
		\underline{\Hom}(U^{\s+(m-1)\x_4},U^{\s+m\x_4}(-\w)[-1])
		=\underline{\Hom}(U^{\s+(m-1)\x_4},\tau^{-1}U^{\s+m\x_4})=k,
	\end{align*}
	 and thus $M$ is not a tilting object in $\underline{\CM}^{\L} R$. Hence the assertion follows.
	 \end{proof}
		
	\subsection*{Case $(2,2,3,3)$} In this case, there exists a triangle equivalence
	\[ \underline{\CM}^{\L} R  \simeq \DDD^{\bo}(\mod k \mathbb D_{4}).\]	
	 Adding all indecomposable projective modules $R(\x)$ for $\x \in \L$ and arrows connected to them into the  Auslander-Reiten quiver  ${\mathfrak A}(\underline{\CM}^{\L} R)$, we obtain the Auslander-Reiten quiver ${\mathfrak A}({\CM}^{\L} R)$, which is depicted below, see \cite[Theorems 4.13, 4.46]{HIMO}.  
	 	\[
	\resizebox{\textwidth}{!}{
		\begin{xy} 0;<16pt,0pt>:<0pt,16pt>::
			(16,8) *+{\circ} ="42",
			(16,6) *+{\circ} ="62",
			(16,4) *+{\cdot} ="23",
			(18,6) *+{\cdot} ="33",
			(20,8) *+{\bullet} ="43",
			(18,5) *+{\cdot} ="53",
			(20,6) *+{\bullet} ="63",
			(16,0) *+{\circ} ="04",
			(18,2) *+{\cdot} ="14",
			(20,4) *+{\cdot} ="24",
			(22,6) *+{\cdot} ="34",
			(24,8) *+{\circ} ="44",
			(22,5) *+{\cdot} ="54",
			(24,6) *+{\circ} ="64",
			(20,0) *+{(0)} ="05",
			(22,2) *+{U^{\s+\vdelta}} ="15",
			(24,4) *+{\cdot} ="25",
			(26,6) *+{U^{\s+\x_3}} ="35",
			(28,8) *+{(\x_{3})} ="45",
			(26,4.9) *+{U^{\s+\x_4}} ="55",
			(28,6) *+{(\x_4)} ="65",
			(24,0) *+{\circ} ="06",
			(26,2) *+{\cdot} ="16",
			(28,4) *+{G} ="26",
			(30,6) *+{\cdot} ="36",
			(32,8) *+{\circ} ="46",
			(30,5) *+{\cdot} ="56",
			(32,6) *+{\circ} ="66",
			(28,0) *+{\bullet} ="07",
			(30,2) *+{U^{\s}} ="17",
			(32,4) *+{\cdot} ="27",
			(34,6) *+{\cdot} ="37",
			(36,8) *+{\bullet} ="47",
			(34,5) *+{\cdot} ="57",
			(36,6) *+{\bullet} ="67",
			(32,0) *+{(\x_1)} ="08",
			(34,2) *+{\cdot} ="18",
			(36,4) *+{\cdot} ="28",
			(38,6) *+{\cdot} ="38",
			(40,8) *+{\circ} ="48",
			(38,5) *+{\cdot} ="58",
			(40,6) *+{\circ} ="68",
			(36,0) *+{\bullet} ="09",
			(38,2) *+{\cdot} ="19",
			(40,4) *+{\cdot} ="29",
			(42,6) *+{\cdot} ="39",
			(44,8) *+{\bullet} ="49",
			(42,5) *+{\cdot} ="59",
			(44,6) *+{\bullet} ="69",
			(40,0) *+{\circ} ="010",
			(42,2) *+{\cdot} ="110",
			(44,4) *+{\cdot} ="210",
			(44,0) *+{\bullet} ="011",
			"42", {\ar"33"},
			"62", {\ar"53"},
			"23", {\ar"33"},
			"33", {\ar"43"},
			"23", {\ar"53"},
			"53", {\ar"63"},
			"23", {\ar"14"},
			"33", {\ar"24"},
			"43", {\ar"34"},
			"53", {\ar"24"},
			"63", {\ar"54"},
			"04", {\ar"14"},
			"14", {\ar"24"},
			"24", {\ar"34"},
			"34", {\ar"44"},
			"24", {\ar"54"},
			"54", {\ar"64"},
			"14", {\ar"05"},
			"24", {\ar"15"},
			"34", {\ar"25"},
			"44", {\ar"35"},
			"54", {\ar"25"},
			"64", {\ar"55"},
			"05", {\ar"15"},
			"15", {\ar"25"},
			"25", {\ar"35"},
			"35", {\ar"45"},
			"25", {\ar"55"},
			"55", {\ar"65"},
			"15", {\ar"06"},
			"25", {\ar"16"},
			"35", {\ar"26"},
			"45", {\ar"36"},
			"55", {\ar"26"},
			"65", {\ar"56"},
			"06", {\ar"16"},
			"16", {\ar"26"},
			"26", {\ar"36"},
			"36", {\ar"46"},
			"26", {\ar"56"},
			"56", {\ar"66"},
			"16", {\ar"07"},
			"26", {\ar"17"},
			"36", {\ar"27"},
			"46", {\ar"37"},
			"56", {\ar"27"},
			"66", {\ar"57"},
			"07", {\ar"17"},
			"17", {\ar"27"},
			"27", {\ar"37"},
			"37", {\ar"47"},
			"27", {\ar"57"},
			"57", {\ar"67"},
			"17", {\ar"08"},
			"27", {\ar"18"},
			"37", {\ar"28"},
			"47", {\ar"38"},
			"57", {\ar"28"},
			"67", {\ar"58"},
			"08", {\ar"18"},
			"18", {\ar"28"},
			"28", {\ar"38"},
			"38", {\ar"48"},
			"28", {\ar"58"},
			"58", {\ar"68"},
			"18", {\ar"09"},
			"28", {\ar"19"},
			"38", {\ar"29"},
			"48", {\ar"39"},
			"58", {\ar"29"},
			"68", {\ar"59"},
			"09", {\ar"19"},
			"19", {\ar"29"},
			"29", {\ar"39"},
			"39", {\ar"49"},
			"29", {\ar"59"},
			"59", {\ar"69"},
			"19", {\ar"010"},
			"29", {\ar"110"},
			"39", {\ar"210"},
			"59", {\ar"210"},
			"010", {\ar"110"},
			"110", {\ar"210"},
			"110", {\ar"011"},
		\end{xy}
	}
	\]
	Here, the position of $R(\x)$ is denoted $(\x)$, $\bullet$ or $\circ$ in case $\x$ is not specified. The  
	positions of $R(\x)$ and  $R(\x+\x_1-\x_2)$ are overlapping. Every arrow starting or ending at one of vertices $(\x)$, $\bullet$ and $\circ$ represents two arrows in the actual Auslander-Reiten quiver (each one connected to  $R(\x)$ or $R(\x+\x_1-\x_2)$).
	 \begin{proposition} \label{pro2} Assume that $(R,\L)$ is given by weight type $(2,2,3,3)$.  Then  $M$  is not  a $2$-tilting object in $\underline{\CM}^{\L} R$ if and only if $J=[\s+\x_i,\s+\x_3+\x_4]$ for $i=3,4$. 
	\end{proposition}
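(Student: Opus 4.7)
My plan is to work inside $\underline{\CM}^{\L}R\simeq\DDD^{\bo}(\mod k\mathbb{D}_4)$ using the depicted AR quiver, together with the identification $U(-\w)=\tau^{-1}U[1]$ (a consequence of Theorem~\ref{Auslander-Reiten-Serre duality} for $d=2$, which gives Serre functor $(\w)[2]$ and hence AR translation $\tau=(\w)[1]$). Rigidity of $M$ in $\underline{\CM}^{\L}R$ amounts to computing $\underline{\Hom}$'s between its summands; using the rigidity of $U^{\rm CM}$ (Proposition~\ref{pre-prosition}(a)), Auslander-Reiten-Serre duality, and the upset condition on $J$ (which by Proposition~\ref{lem2} forces $\underline{\Hom}(U^{\vell},U^{\z})=0$ whenever $\vell\in I$ and $\z\in J$), all pairs of summands of $M$ reduce to vanishing Hom's except the ``mixed'' term
\[\underline{\Hom}\bigl(U^{\vell},U^{\z}(-\w)[-1]\bigr)\ =\ \underline{\Hom}\bigl(U^{\vell},\tau^{-1}U^{\z}\bigr)\qquad (\vell\in I,\ \z\in J).\]
In the AR quiver this Hom is detected by a length-two path $U^{\vell}\to C\to\tau^{-1}U^{\z}$ through a central vertex $C$. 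Such a path is a mesh composition at $\tau^{-1}U^{\z}$ (and hence vanishes) precisely when $U^{\vell}=\tau(\tau^{-1}U^{\z})=U^{\z}$; since $\vell\in I$ and $\z\in J$ are disjoint, the mesh cancellation never applies, so the Hom is nonzero exactly when such a path exists. The remaining problem is a finite combinatorial search.

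For the ``if'' direction, take $J=\{\s+\x_3,\s+\x_3+\x_4\}$; the case $i=4$ is symmetric. Then $U^{\s+\x_4}\in I$ and $U^{\s+\x_3}\in J$, and in the depicted AR quiver the path $U^{\s+\x_4}\to G\to\tau^{-1}U^{\s+\x_3}$ through the central vertex $G$ exists. Hence $\underline{\Hom}(U^{\s+\x_4},U^{\s+\x_3}(-\w)[-1])\ne 0$, so $M$ is not rigid in $\underline{\CM}^{\L}R$ and thus not a $2$-tilting object there. The case $i=4$ uses the symmetric path $U^{\s+\x_3}\to G\to\tau^{-1}U^{\s+\x_4}$.

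For the ``only if'' direction, $J=\emptyset$ and $J=[\s,\s+\vdelta]$ reduce, modulo projective summands, to $U^{\rm CM}$ and $U^{\rm CM}(-\w)$ respectively, both $2$-tilting in $\underline{\CM}^{\L}R$ by Proposition~\ref{pre-prosition}(a). For $J=\{\s+\x_3+\x_4\}$ the unique target is $\tau^{-1}U^{\s+\x_3+\x_4}$, and the central vertex preceding it in the AR quiver has no incoming irreducible morphism from any of $U^{\s},U^{\s+\x_3},U^{\s+\x_4}\in I$. For $J=\{\s+\x_3,\s+\x_4,\s+\x_3+\x_4\}$ the sole element of $I$ is $\s$, and $U^{\s}$ admits no irreducible morphism to any central vertex preceding $\tau^{-1}U^{\z}$ for $\z\in J$. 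Hence no bad length-two paths exist, $M$ is rigid, and together with the count of indecomposable summands (four, matching the rank of $\mathbb{D}_4$) this makes $M$ a $2$-tilting object in $\underline{\CM}^{\L}R$.

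The main obstacle is the precise identification, in the AR quiver of $\underline{\CM}^{\L}R$ of type $(2,2,3,3)$, of the central vertex $G$ together with its predecessors and successors, so as to verify the length-two path analysis in each case; once this is pinned down, the six upset cases split cleanly into the two non-tilting and four tilting configurations.
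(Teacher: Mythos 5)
Your overall strategy---reducing everything to Hom computations in the AR quiver of $\DDD^{\bo}(\mod k\mathbb D_{4})$ and locating the obstruction $\underline{\Hom}(U^{\vell},\tau^{-1}U^{\z})\neq 0$ for $\vell\in I$, $\z\in J$---is the same as the paper's, and you identify the two bad upsets correctly. But the rigidity part has a real gap. You assert that all Hom's between summands of $M$ vanish except the mixed term $\underline{\Hom}(U^{\vell},U^{\z}(-\w)[-1])$, but the vanishing of $\underline{\Hom}(U^{\vell},U^{\z}(-\w)[i])$ for $i\neq 0,-1$ does not follow from rigidity of $U^{\rm CM}$ plus Serre duality alone: the twist $(-\w)$ equals $\tau^{-1}[1]$, so these groups are $\underline{\Hom}(U^{\vell},\tau^{-1}U^{\z}[i+1])$, and $\tau^{-1}U^{\z}$ is not a shift of a summand of $U^{\rm CM}$. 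Moreover your criterion ``the Hom is nonzero exactly when a length-two path exists, modulo mesh cancellation'' is not a valid principle in the mesh category: for instance $\underline{\Hom}(U^{\s},U^{\s+\vdelta}(-\w))$ is one-dimensional even though the two objects are four arrows apart in the AR quiver. The paper closes both issues at once by computing the complete Hom-hammocks $\underline{\Hom}(U^{\vell},-)$ for $\vell=\s,\s+\x_3,\s+\x_4$ (its diagrams (a)--(c)); some equally exhaustive computation is unavoidable here.

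The second and more serious gap is in the ``only if'' direction, where you must show that $M$ \emph{is} a $2$-tilting object for the four good upsets. This requires three things: rigidity, generation ($\thick M=\underline{\CM}^{\L}R$), and $\gl\End(M)\le 2$. You replace generation by ``four summands, matching the rank of $\mathbb D_{4}$,'' which silently invokes the nontrivial fact that a rigid object with the maximal number of indecomposable summands in a hereditary derived category is tilting; you neither prove nor cite this, whereas the paper exhibits explicit triangles showing $G$ and then $U^{\s+\vdelta}$ lie in $\thick M$. And you omit the global dimension condition entirely: a tilting object need not be $2$-tilting, and the paper's Step 3 computes $\End(M)$ as a quiver with relations precisely to verify $\gl\End(M)=2$. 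As written, your argument determines which $M$ are non-rigid, but not which are $2$-tilting objects.
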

	\begin{proof} We assume for contradiction that $J$ is one of the four cases
		$$ \emptyset, \  \{\s+\vdelta\},  \ (\s,\s+\vdelta] \text{ and }   [\s,\s+\vdelta], $$ where $\vdelta=\x_3+\x_4$. 
		 For the case $J=\emptyset$ or $[\s,\s+\vdelta]$, by Proposition \ref{pre-prosition}, $M$ is a $2$-tilting object in $\underline{\CM}^{\L} R$. Next we only show the case for $J=\{\s+\vdelta\}$, as one can show the case for $J=(\s,\s+\vdelta]$ by a similar argument. In this case, $$M=(\bigoplus_{\vell\in [\s,\s+\vdelta)} U^{\vell} ) \oplus  (U^{\s+\vdelta}(-\w) ) \oplus(\bigoplus_{\x\in \mathcal{S}}R(\x)).$$
		 
		 \emph{Step $1$}: We show that $M$ is rigid, that is,	$\underline{\Hom}(M,M[n])=0$ for any $n\neq 0$.			By Proposition \ref{lem2} and Proposition \ref{relationship}, we have 
		 $$\Hom^{\L}_{R}(U^{\vell},U^{\s+\vdelta})=\Hom_{\X}(\pi(U^{\vell}),\pi(U^{\s+\vdelta}))=0,$$ which yields that $\underline{\Hom}(U^{\vell},U^{\s+\vdelta})=0$. Invoking Theorem \ref{pre-prosition}, we further have
		  $\underline{\Hom}(U^{\vell},U^{\s+\vdelta}[i])=0$  for any $\vell\in [\s,\s+\vdelta)$ and $i \in \Z$. By  Auslander-Reiten-Serre duality, we have $\underline{\Hom}(U^{\s+\vdelta}(-\w),U^{\vell}[i])=0$. 		  		
		On the other hand, keeping the above notations, the Auslander-Reiten quiver of $\underline{\CM}^{\L} R$ has the  form
		\[
		\resizebox{\textwidth}{!}{
			\begin{xy} 0;<16pt,0pt>:<0pt,20pt>::
				(13.5,4) *+{\cdots} ="",
				(14,6) *+{\cdot} ="12",
				(14,5) *+{\cdot} ="22",				
				(14,2) *+{\diamondsuit_{-2}} ="32",
				(16,4) *+{\cdot} ="23",
				(18,6) *+{\cdot} ="33",
				(18,5) *+{\cdot} ="53",
				(18,2) *+{\cdot} ="14",
				(20,4) *+{\cdot} ="24",
				(22,6) *+{\cdot} ="34",
				(22,5) *+{\cdot} ="54",
				(22,2) *+{U^{\s+\vdelta}} ="15",
				(24,4) *+{\cdot} ="25",
				(26,6) *+{U^{\s+\x_3}} ="35",
				(26,4.9) *+{U^{\s+\x_4}} ="55",
				(26,2) *+{\diamondsuit_{-1}} ="16",
				(28,4) *+{G} ="26",
				(30,6) *+{\cdot} ="36",
				(30,5) *+{\cdot} ="56",
				(30,2) *+{U^{\s}} ="17",
				(32,4) *+{\cdot} ="27",
				(34,6) *+{\cdot} ="37",
				(34,5) *+{\cdot} ="57",
				(34,2) *+{\cdot} ="18",
				(36,4) *+{\cdot} ="28",
				(38,6) *+{\cdot} ="38",
				(38,5) *+{\cdot} ="58",
				(38,2) *+{\diamondsuit_{0}} ="19",
				(40,4) *+{\cdot} ="29",
				(42,6) *+{\cdot} ="39",
				(42,5) *+{\cdot} ="59",
				(42,2) *+{\cdot} ="110",
				(42.5,4) *+{\cdots} ="",
				"12", {\ar"23"},
				"22", {\ar"23"},
				"32", {\ar"23"},
				"23", {\ar"33"},
				"23", {\ar"53"},
				"23", {\ar"14"},
				"33", {\ar"24"},
				"53", {\ar"24"},
				"14", {\ar"24"},
				"24", {\ar"34"},
				"24", {\ar"54"},
				"24", {\ar"15"},
				"34", {\ar"25"},
				"54", {\ar"25"},
				"15", {\ar"25"},
				"25", {\ar"35"},
				"25", {\ar"55"},
				"25", {\ar"16"},
				"35", {\ar"26"},
				"55", {\ar"26"},
				"16", {\ar"26"},
				"26", {\ar"36"},
				"26", {\ar"56"},
				"26", {\ar"17"},
				"36", {\ar"27"},
				"56", {\ar"27"},
				"17", {\ar"27"},
				"27", {\ar"37"},
				"27", {\ar"57"},
				"27", {\ar"18"},
				"37", {\ar"28"},
				"57", {\ar"28"},
				"18", {\ar"28"},
				"28", {\ar"38"},
				"28", {\ar"58"},
				"28", {\ar"19"},
				"38", {\ar"29"},
				"58", {\ar"29"},
				"19", {\ar"29"},
				"29", {\ar"39"},
				"29", {\ar"59"},
				"29", {\ar"110"},
			\end{xy}
		}
		\]
		where the symbol $\diamondsuit_i$ marks the object $U^{\s+\vdelta}(-\w)[i]$ for $i\in \Z$. Next we compute the dimensions of $\underline{\Hom}(U^{\vell},-)$ for each $\vell\in [\s,\s+\vdelta)$ as follows. 						
		\[
		\resizebox{\textwidth}{!}{
			\begin{xy} 0;<16pt,0pt>:<0pt,20pt>::
				(13.5,4) *+{\cdots} ="",
				(14,6) *+{0} ="12",
				(14,5) *+{0} ="22",				
				(14,2) *+{0} ="32",
				(16,4) *+{0} ="23",
				(18,6) *+{0} ="33",
				(18,5) *+{0} ="53",
				(18,2) *+{0} ="14",
				(20,4) *+{0} ="24",
				(22,6) *+{0} ="34",
				(22,5) *+{0} ="54",
				(22,2) *+{0} ="15",
				(24,4) *+{0} ="25",
				(26,6) *+{\mathbf{1}} ="35",
				(26,5) *+{0} ="55",
				(26,2) *+{0} ="16",
				(28,4) *+{1} ="26",
				(30,6) *+{0} ="36",
				(30,5) *+{1} ="56",
				(30,2) *+{1} ="17",
				(32,4) *+{1} ="27",
				(34,6) *+{1} ="37",
				(34,5) *+{0} ="57",
				(34,2) *+{0} ="18",
				(36,4) *+{0} ="28",
				(38,6) *+{0} ="38",
				(38,5) *+{0} ="58",
				(38,2) *+{0} ="19",
				(40,4) *+{0} ="29",
				(42,6) *+{0} ="39",
				(42,5) *+{0} ="59",
				(42,2) *+{0} ="110",
				(42.5,4) *+{\cdots} ="",
				"12", {\ar"23"},
				"22", {\ar"23"},
				"32", {\ar"23"},
				"23", {\ar"33"},
				"23", {\ar"53"},
				"23", {\ar"14"},
				"33", {\ar"24"},
				"53", {\ar"24"},
				"14", {\ar"24"},
				"24", {\ar"34"},
				"24", {\ar"54"},
				"24", {\ar"15"},
				"34", {\ar"25"},
				"54", {\ar"25"},
				"15", {\ar"25"},
				"25", {\ar"35"},
				"25", {\ar"55"},
				"25", {\ar"16"},
				"35", {\ar"26"},
				"55", {\ar"26"},
				"16", {\ar"26"},
				"26", {\ar"36"},
				"26", {\ar"56"},
				"26", {\ar"17"},
				"36", {\ar"27"},
				"56", {\ar"27"},
				"17", {\ar"27"},
				"27", {\ar"37"},
				"27", {\ar"57"},
				"27", {\ar"18"},
				"37", {\ar"28"},
				"57", {\ar"28"},
				"18", {\ar"28"},
				"28", {\ar"38"},
				"28", {\ar"58"},
				"28", {\ar"19"},
				"38", {\ar"29"},
				"58", {\ar"29"},
				"19", {\ar"29"},
				"29", {\ar"39"},
				"29", {\ar"59"},
				"29", {\ar"110"},
			\end{xy}
		}
		\]	{\centering (a) The dimensions of $\underline{\Hom}(U^{\s+\x_3},-)$.
			
		}				
		\[
		\resizebox{\textwidth}{!}{
			\begin{xy} 0;<16pt,0pt>:<0pt,20pt>::
				(13.5,4) *+{\cdots} ="",
				(14,6) *+{0} ="12",
				(14,5) *+{0} ="22",				
				(14,2) *+{0} ="32",
				(16,4) *+{0} ="23",
				(18,6) *+{0} ="33",
				(18,5) *+{0} ="53",
				(18,2) *+{0} ="14",
				(20,4) *+{0} ="24",
				(22,6) *+{0} ="34",
				(22,5) *+{0} ="54",
				(22,2) *+{0} ="15",
				(24,4) *+{0} ="25",
				(26,6) *+{0} ="35",
				(26,5) *+{\mathbf{1}} ="55",
				(26,2) *+{0} ="16",
				(28,4) *+{1} ="26",
				(30,6) *+{1} ="36",
				(30,5) *+{0} ="56",
				(30,2) *+{1} ="17",
				(32,4) *+{1} ="27",
				(34,6) *+{0} ="37",
				(34,5) *+{1} ="57",
				(34,2) *+{0} ="18",
				(36,4) *+{0} ="28",
				(38,6) *+{0} ="38",
				(38,5) *+{0} ="58",
				(38,2) *+{0} ="19",
				(40,4) *+{0} ="29",
				(42,6) *+{0} ="39",
				(42,5) *+{0} ="59",
				(42,2) *+{0} ="110",
				(42.5,4) *+{\cdots} ="",
				"12", {\ar"23"},
				"22", {\ar"23"},
				"32", {\ar"23"},
				"23", {\ar"33"},
				"23", {\ar"53"},
				"23", {\ar"14"},
				"33", {\ar"24"},
				"53", {\ar"24"},
				"14", {\ar"24"},
				"24", {\ar"34"},
				"24", {\ar"54"},
				"24", {\ar"15"},
				"34", {\ar"25"},
				"54", {\ar"25"},
				"15", {\ar"25"},
				"25", {\ar"35"},
				"25", {\ar"55"},
				"25", {\ar"16"},
				"35", {\ar"26"},
				"55", {\ar"26"},
				"16", {\ar"26"},
				"26", {\ar"36"},
				"26", {\ar"56"},
				"26", {\ar"17"},
				"36", {\ar"27"},
				"56", {\ar"27"},
				"17", {\ar"27"},
				"27", {\ar"37"},
				"27", {\ar"57"},
				"27", {\ar"18"},
				"37", {\ar"28"},
				"57", {\ar"28"},
				"18", {\ar"28"},
				"28", {\ar"38"},
				"28", {\ar"58"},
				"28", {\ar"19"},
				"38", {\ar"29"},
				"58", {\ar"29"},
				"19", {\ar"29"},
				"29", {\ar"39"},
				"29", {\ar"59"},
				"29", {\ar"110"},
			\end{xy}
		}
		\]{\centering (b) The dimensions of $\underline{\Hom}(U^{\s+\x_4},-)$.
			
		}			
		\[
		\resizebox{\textwidth}{!}{
			\begin{xy} 0;<16pt,0pt>:<0pt,20pt>::
				(13.5,4) *+{\cdots} ="",
				(14,6) *+{0} ="12",
				(14,5) *+{0} ="22",				
				(14,2) *+{0} ="32",
				(16,4) *+{0} ="23",
				(18,6) *+{0} ="33",
				(18,5) *+{0} ="53",
				(18,2) *+{0} ="14",
				(20,4) *+{0} ="24",
				(22,6) *+{0} ="34",
				(22,5) *+{0} ="54",
				(22,2) *+{0} ="15",
				(24,4) *+{0} ="25",
				(26,6) *+{0} ="35",
				(26,5) *+{0} ="55",
				(26,2) *+{0} ="16",
				(28,4) *+{0} ="26",
				(30,6) *+{0} ="36",
				(30,5) *+{0} ="56",
				(30,2) *+{\mathbf{1}} ="17",
				(32,4) *+{1} ="27",
				(34,6) *+{1} ="37",
				(34,5) *+{1} ="57",
				(34,2) *+{0} ="18",
				(36,4) *+{1} ="28",
				(38,6) *+{0} ="38",
				(38,5) *+{0} ="58",
				(38,2) *+{1} ="19",
				(40,4) *+{0} ="29",
				(42,6) *+{0} ="39",
				(42,5) *+{0} ="59",
				(42,2) *+{0} ="110",
				(42.5,4) *+{\cdots} ="",
				"12", {\ar"23"},
				"22", {\ar"23"},
				"32", {\ar"23"},
				"23", {\ar"33"},
				"23", {\ar"53"},
				"23", {\ar"14"},
				"33", {\ar"24"},
				"53", {\ar"24"},
				"14", {\ar"24"},
				"24", {\ar"34"},
				"24", {\ar"54"},
				"24", {\ar"15"},
				"34", {\ar"25"},
				"54", {\ar"25"},
				"15", {\ar"25"},
				"25", {\ar"35"},
				"25", {\ar"55"},
				"25", {\ar"16"},
				"35", {\ar"26"},
				"55", {\ar"26"},
				"16", {\ar"26"},
				"26", {\ar"36"},
				"26", {\ar"56"},
				"26", {\ar"17"},
				"36", {\ar"27"},
				"56", {\ar"27"},
				"17", {\ar"27"},
				"27", {\ar"37"},
				"27", {\ar"57"},
				"27", {\ar"18"},
				"37", {\ar"28"},
				"57", {\ar"28"},
				"18", {\ar"28"},
				"28", {\ar"38"},
				"28", {\ar"58"},
				"28", {\ar"19"},
				"38", {\ar"29"},
				"58", {\ar"29"},
				"19", {\ar"29"},
				"29", {\ar"39"},
				"29", {\ar"59"},
				"29", {\ar"110"},
			\end{xy}
		}
		\]{\centering (c) The dimensions of $\underline{\Hom}(U^{\s},-).$
			
		}

		Putting things together, we have shown that $\underline{\Hom}(U^{\vell},U^{\s+\vdelta}(-\w)[i])=0$ for any $\vell\in [\s,\s+\vdelta)$ and $i\neq 0$. Hence $M$ is rigid.
		
		 \emph{Step $2$}: We show that $\thick M=\underline{\CM}^{\L} R$. Notice that there are two triangles
		 \begin{eqnarray}
		 	U^{\s+\vdelta}(-\w)[-1] \to	G \to U^{\s} \to U^{\s+\vdelta}(-\w), \label{tri 1}\\
		U^{\s+\vdelta} \to U^{\s+\x_3}\oplus U^{\s+\x_4} 	 \to G \to U^{\s+\vdelta}[1] \label{tri 2}.
		\end{eqnarray}
		Thus we have $G \in \thick M$ by (\ref{tri 1}), and further  $U^{\s+\vdelta}\in \thick M$ by (\ref{tri 2}). By Proposition \ref{pre-prosition}, $U^{\rm CM}$ is a tilting object in $\underline{\CM}^{\L} R$. Since $U^{\rm CM} \in \thick M$,  we have $\thick M= \thick U^{\rm CM}=\underline{\CM}^{\L} R$.

		\emph{Step $3$}: The endomorphism algebra of $M$ is the quiver
	$$\xymatrixrowsep{0.2in}
	\xymatrixcolsep{0.2in}
	\xymatrix@C=12pt{\stackrel{1}\circ\ar[rrd]^{a}&&\\
		&&\stackrel{3}\circ\ar[rrr]^{c}&&&
		\stackrel{4}\circ~~~\\
		\stackrel{2}\circ\ar[rru]_{b}
	}$$	
	with relations $ab=0$ and $bc=0$. Thus we have gl.dim $\End (M)=2$.
	
	Therefore $M$ is a $2$-tilting object in $\underline{\CM}^{\L} R$, a contradiction to our assumption.

	On the other hand,	we only show the case for $J=[\s+\x_3,\s+\x_3+\x_4]$, as one can show the other case  similarly. By the above diagram (a), we have
	$$\Hom(U^{\s+\x_4},U^{\s+\x_3}(-\w)[-1])=\Hom(U^{\s+\x_4},\tau^{-1}U^{\s+\x_3})=k,$$
	where $\tau$ is the Auslander–Reiten translation. Thus $M$ is not a tilting object.
	\end{proof}

 	\section{examples} \label{sec:examples}
 	In this section,  we will give two  non-examples: one for the Problem \ref{problem}, and the other for the converse of  Theorem \ref{partical ans}.  
 	
 	First we present a non-example for the Problem \ref{problem}.
 	
 	\begin{example} Let $(R,\L)$ be a GL hypersurface of weight type $(2,2,2,4)$, and $\X$ be the corresponding GL projective space. Keeping the above notations, the Auslander-Reiten quiver of ${\mathfrak A}({\CM^{\L} R})$ is depicted below.	 	
 	 	\[ \resizebox{\textwidth}{!}{
 		\begin{xy} 0;<16pt,0pt>:<0pt,16pt>::
 			(16,8) *+{ {\bullet}} ="42",
 			(16,4) *+{\cdot} ="23",
 			(18,6) *+{ \scalebox{1}{$\langle 2,-3 \rangle$}} ="33",
 			(20,8) *+{ \scalebox{1}{$(\x_3-3\x_4)$}} ="43",
 			(16,0) *+{ {\bullet}} ="04",
 			(18,2) *+{  \scalebox{1}{$\langle 0,-2 \rangle$}} ="14",
 			(20,4) *+{\langle 1,-2 \rangle} ="24",
 			(22,6) *+{\langle 2,-2 \rangle} ="34",
 			(24,8) *+{ \scalebox{1}{$(\x_3-2\x_4)$}} ="44",
 			(20,0) *+{ \scalebox{1}{$(-\x_4)$}} ="05",
 			(22,2) *+{\langle 0,-1 \rangle} ="15",
 			(24,4) *+{\langle 1,-1 \rangle} ="25",
 			(26,6) *+{  \scalebox{1}{$\langle 2,-1 \rangle$}} ="35",
 			(28,8) *+{ \scalebox{1}{$(\x_3-\x_4)$}} ="45",
 			(24,0) *+{\scalebox{1}{$(0)$}} ="06",
 			(26,2) *{\scalebox{1}{$\langle 0,0 \rangle$}} ="16",
 			(28,4) *+{ \scalebox{1}{$\langle 1,0 \rangle$}} ="26",
 			(30,6) *+{ \scalebox{1}{$\langle 2,0 \rangle$}} ="36",
 			(32,8) *+{ \scalebox{1}{$(\x_3)$}} ="46",
 			(28,0) *+{ \scalebox{1}{$(\x_4)$}} ="07",
 			(30,2) *+{ \scalebox{1}{$\langle 0,1 \rangle$}} ="17",
 			(32,4) *+{\langle 1,1 \rangle} ="27",
 			(34,6) *+{\langle 2,1 \rangle} ="37",
 			(36,8) *+{ \scalebox{1}{$(\x_3+\x_4)$}} ="47",
 			(32,0) *+{ (2\x_4)} ="08",
 			(34,2) *+{\langle 0,2 \rangle} ="18",
 			(36,4) *+{\langle 1,2 \rangle} ="28",
 			(38,6) *+{\scalebox{1}{$\langle 2,2 \rangle$}} ="38",
 			(40,8) *+{ \scalebox{1}{$(\x_3+2\x_4)$}} ="48",
 			(36,0) *+{ \scalebox{1}{$(3\x_4)$}} ="09",
 			(38,2) *+{ \scalebox{1}{$\langle 0,3 \rangle$}} ="19",
 			(40,4) *+{\langle 1,3 \rangle} ="29",
 			(42,6) *+{\langle 2,3 \rangle} ="39",
 			(44,8) *+{\bullet} ="49",
 			(40,0) *+{ \scalebox{1}{$(\c)$}} ="010",
 			(42,2) *+{\langle 0,4 \rangle} ="110",
 			(44,4) *+{\cdot} ="210",
 			(44,0) *+{\bullet} ="011",
 			"42", {\ar"33"},
 			"33", {\ar"43"},
 			"23", {\ar"14"},
 			"23", {\ar"33"},
 			"33", {\ar"24"},
 			"43", {\ar"34"},
 			"04", {\ar"14"},
 			"14", {\ar"24"},
 			"34", {\ar"44"},
 			"14", {\ar"05"},
 			"24", {\ar"15"},
 			"24", {\ar"34"},
 			"34", {\ar"25"},
 			"44", {\ar"35"},
 			"05", {\ar"15"},
 			"15", {\ar"25"},
 			"35", {\ar"45"},
 			"35", {\ar"26"},
 			"15", {\ar"06"},
 			"25", {\ar"16"},
 			"25", {\ar"35"},
 			"45", {\ar"36"},
 			"06", {\ar"16"},
 			"16", {\ar"26"},
 			"36", {\ar"46"},
 			"36", {\ar"27"},
 			"16", {\ar"07"},
 			"26", {\ar"17"},
 			"26", {\ar"36"},
 			"46", {\ar"37"},
 			"07", {\ar"17"},
 			"17", {\ar"27"},
 			"37", {\ar"47"},
 			"37", {\ar"28"},
 			"27", {\ar"37"},
 			"17", {\ar"08"},
 			"27", {\ar"18"},
 			"47", {\ar"38"},
 			"08", {\ar"18"},
 			"18", {\ar"28"},
 			"28", {\ar"38"},
 			"38", {\ar"48"},
 			"38", {\ar"29"},
 			"18", {\ar"09"},
 			"28", {\ar"19"},
 			"48", {\ar"39"},
 			"09", {\ar"19"},
 			"19", {\ar"29"},
 			"39", {\ar"49"},
 			"39", {\ar"210"},
 			"19", {\ar"010"},
 			"29", {\ar"110"},
 			"29", {\ar"39"},
 			"010", {\ar"110"},
 			"110", {\ar"210"},
 			"110", {\ar"011"},
 		\end{xy}
 	} \]

 	 The object $U^{\rm CM}=\bigoplus_{i=0}^{2} \langle i,0 \rangle$ is a $2$-tilting object  in $\underline{\CM}^{\L} R$, and thus $\UU:=\add \{ U^{\rm CM}(\ell\w),~R(\x) \mid \ell \in\Z,~\x\in \L \}$ is a $2$-cluster tilting subcategory of ${\CM}^{\L} R$. Note that $\langle i,j \rangle(-\w)=\langle i,j+1 \rangle[1]$, where $[1]$ denotes the suspension 
 	  in $\underline{\CM}^{\L} R$. One can compute the following Auslander-Reiten quiver of $\UU$ from ${\mathfrak A}({\CM^{\L} R})$.   	 
 	 \[
 	 \resizebox{\textwidth}{!}
 	 {
 	 	\begin{xy} 0;<0pt,30pt>:<30pt,0pt>:: 
 	 		(4,-1.8) *+{\cdots} ="x",
 	 		(4,12.8) *+{\cdots} ="x",
 	 		(0,0) *+{(-\x_3-\x_4)} ="00",
 	 		(2,0) *+{\langle 2,-4\rangle} ="10",
 	 		(4,0) *+{\langle 1,-3\rangle} ="20",
 	 		(6,0) *+{\langle 0,-2\rangle} ="30",
 	 		(8,0) *+{(-\x_4)} ="40",
 	 		(2,2) *+{(-\x_3)} ="11",
 	 		(4,2) *+{(\x_3-\x_4)} ="21",
 	 		(6,2) *+{(\x_3-2\x_4)} ="31",
 	 		(8,2) *+{(\x_3-\x_4)} ="41",
 	 		(0,4.5) *+{(0)} ="02",
 	 		(2,4.5) *+{\langle 0,0\rangle} ="12",
 	 		(4,4.5) *+{\langle 1,0\rangle} ="22",
 	 		(6,4.5) *+{\langle 2,0\rangle} ="32",
 	 		(8,4.5) *+{(\x_3)} ="42",
 	 		(2,6.5) *+{(\x_4)} ="13",
 	 		(4,6.5) *+{(2\x_4)} ="23",
 	 		(6,6.5) *+{(3\x_4)} ="33",
 	 		(8,6.5) *+{(\c)} ="43",
 	 		(0,9) *+{(\x_3+\x_4)} ="04",
 	 		(2,9) *+{\langle 2,2\rangle} ="14",
 	 		(4,9) *+{\langle 1,3\rangle} ="24",
 	 		(6,9) *+{\langle 0,4\rangle} ="34",
 	 		(8,9) *+{(\x_4+\c)} ="44",
 	 		(2,11) *+{(\x_3+2\x_4)} ="15",
 	 		(4,11) *+{(\x_3+3\x_4)} ="25",
 	 		(6,11) *+{(\x_3+\c)} ="35",
 	 		(8,11) *+{(\x_3+\x_4+\c)} ="45",
 	 		"00", {\ar"10"},
 	 		"10", {\ar"20"},
 	 		"20", {\ar"30"},
 	 		"30", {\ar"40"},
 	 		"11", {\ar"21"},
 	 		"21", {\ar"31"},
 	 		"31", {\ar"41"},
 	 		"02", {\ar"12"},
 	 		"12", {\ar"22"},
 	 		"22", {\ar"32"},
 	 		"32", {\ar"42"},
 	 		"13", {\ar"23"},
 	 		"23", {\ar"33"},
 	 		"33", {\ar"43"},
 	 		"04", {\ar"14"},
 	 		"14", {\ar"24"},
 	 		"24", {\ar"34"},
 	 		"34", {\ar"44"},
 	 		"15", {\ar"25"},
 	 		"25", {\ar"35"},
 	 		"35", {\ar"45"},
 	 		"10", {\ar"11"},
 	 		"20", {\ar"21"},
 	 		"30", {\ar"31"},
 	 		"40", {\ar@<-1.2pt>"41"},
 	 		"40", {\ar@<1.2pt>"41"},
 	 		"12", {\ar"13"},
 	 		"22", {\ar"23"},
 	 		"32", {\ar"33"},
 	 		"42", {\ar@<-1.2pt>"43"},
 	 		"42", {\ar@<1.2pt>"43"},
 	 		"14", {\ar"15"},
 	 		"24", {\ar"25"},
 	 		"34", {\ar"35"},
 	 		"44", {\ar@<-1.2pt>"45"},
 	 		"44", {\ar@<1.2pt>"45"},
 	 		"11", {\ar@<-.3ex>"02"},
 	 		"11", {\ar@<.3ex>"02"},
 	 		"21", {\ar"12"},
 	 		"31", {\ar"22"},
 	 		"41", {\ar"32"},
 	 		"13", {\ar@<-.3ex>"04"},
 	 		"13", {\ar@<.3ex>"04"},
 	 		"23", {\ar"14"},
 	 		"33", {\ar"24"},
 	 		"43", {\ar"34"},
 	 		"40", {\ar"02"},
 	 		"42", {\ar"04"},
 	 	 \end{xy}
  	}
	\] With regards to arrows the following situations occur for $U=\langle i,j \rangle$ and $\x \in \L$: 	 
	\[
	\begin{xy} 0;<50pt,0pt>:<0pt,-18pt>:: 
		(0,-1.25) *+{R(\x)} ="01",
		(0,0) *+{R(\x+\vec{t}_{12})} ="02",
		(0,1.25) *+{R(\x+\vec{t}_{13})} ="03",
		(0,2.5) *+{R(\x+\vec{t}_{23})} ="04",
		(1.2,0.75) *+{U} ="1",
		(3.4,-1.25) *+{R(\x)} ="11",
		(3.4,0) *+{R(\x+\vec{t}_{12})} ="12",
		(3.4,1.25) *+{R(\x+\vec{t}_{13})} ="13",
		(3.4,2.5) *+{R(\x+\vec{t}_{23})} ="14",
		(2.2,0.75) *+{U} ="2", 
		"01", {\ar"1"},
		"02", {\ar"1"},
		"03", {\ar"1"},
		"04", {\ar"1"},
		"2", {\ar"11"},
		"2", {\ar"12"},
		"2", {\ar"13"},
		"2", {\ar"14"},
	\end{xy}
	\]
 	 \[
 	 \begin{xy} 0;<50pt,0pt>:<0pt,-20pt>:: 
 	 	(4,-1.25) *+{R(\x)} ="21",
 	 	(4,0) *+{R(\x+\vec{t}_{12})} ="22",
 	 	(4,1.25) *+{R(\x+\vec{t}_{13})} ="23",
 	 	(4,2.5) *+{R(\x+\vec{t}_{23})} ="24",
 	 	(5.7,-1.25) *+{R(\x+\x_4)} ="31",
 	 	(5.7,0) *+{R(\x+\vec{t}_{12}+\x_4)} ="32",
 	 	(5.7,1.25) *+{R(\x+\vec{t}_{13}+\x_4)} ="33",
 	 	(5.7,2.5) *+{R(\x+\vec{t}_{23}+\x_4)} ="34",
 	 	(7.3,-1.25) *+{R(\x)} ="41",
 	 	(7.3,0) *+{R(\x+\vec{t}_{12})} ="42",
 	 	(7.3,1.25) *+{R(\x+\vec{t}_{13})} ="43",
 	 	(7.3,2.5) *+{R(\x+\vec{t}_{23})} ="44",
 	 	(9.5,-1.25) *+{R(\x+\x_3)} ="51",
 	 	(9.5,0) *+{R(\x+\vec{t}_{12}+\x_3)} ="52",
 	 	(9.5,1.25) *+{R(\x+\vec{t}_{13}+\x_3)} ="53",
 	 	(9.5,2.5) *+{R(\x+\vec{t}_{23}+\x_3)} ="54",
 	 	"21", {\ar"31"},
 	 	"22", {\ar"32"},
 	 	"23", {\ar"33"},
 	 	"24", {\ar"34"},
 	 	"41", {\ar"51"},
 	 	"41", {\ar"53"},
 	 	"41", {\ar"54"},
 	 	"42", {\ar"52"},
 	 	"42", {\ar"53"},
 	 	"42", {\ar"54"},
 	 	"43", {\ar"51"},
 	 	"43", {\ar"52"},
 	 	"43", {\ar"53"},
 	 	"44", {\ar"51"},
 	 	"44", {\ar"52"},
 	 	"44", {\ar"54"},
 	 \end{xy}
 	 \]
 	  For simplicity, we denote them by
 	  \[
 	  \begin{xy} 0;<50pt,0pt>:<0pt,-20pt>:: 
 	  	(4,0) *+{(\x)} ="4",
 	  	(5,0) *+{U} ="5",
 	  	(6.5,0) *+{U} ="6",
 	  	(7.5,0) *+{(\x)} ="7",
 	  	"4", {\ar"5"},
 	  	"6", {\ar"7"},
 	  \end{xy}
 	  \]	  
 	 \[
 	 \begin{xy} 0;<50pt,0pt>:<0pt,-18pt>:: 
 	 	(4,0) *+{(\x)} ="4",
 	 	(5.3,0) *+{(\x+\x_4)} ="5",
 	 	(6.7,0) *+{(\x)} ="6",
 	 	(8,0) *+{(\x+\x_3).} ="7",
 	 	"4", {\ar"5"},
 	 	"6", {\ar@<-.2ex>"7"},
 	 	"6", {\ar@<.2ex>"7"},
 	 \end{xy}
 	 \]
 	In this example, one can check that the set $\mathcal{S}=\{\x+\vec{t}_{ab}\mid \x\in H,\ 1\le a<b\le 3 \}$, where $H:=\{\x_3,\x_3+\x_4,\x_4,2\x_4,3\x_4,\c\}$. 
 	 The $2$-tilting bundle $T$ in Proposition \ref{pre-prosition} is given by
 	$$T=U^{\rm CM} \oplus (\bigoplus_{\x\in \mathcal{S}}R(\x)).$$
 	Take a decomposition $T=\langle 0,0 \rangle\oplus F$. 
 	Since $\Hom(F,\langle 0,0 \rangle)=0$, we may mutate and obtain  a $2$-tilting bundle 
 	$$M=\langle 1,0 \rangle \oplus \langle 2,0 \rangle \oplus \langle 2,2 \rangle \oplus (\bigoplus_{\x\in \mathcal{S}}R(\x))$$
 	on $\X$. However, $\langle 1,0 \rangle \oplus \langle 2,0 \rangle \oplus \langle 2,2 \rangle$ is not a tilting object in $\underline{\CM}^{\L} R$ since 
 		\begin{align*}
 		\underline{\Hom}(\langle 1,0 \rangle, \langle 2,2 \rangle[-1])
 		=\underline{\Hom}(\langle 1,0 \rangle, \langle 0,1 \rangle)=k.
 	\end{align*}
 	\end{example}
 	
 	In the rest, we present a non-example for the converse of  Theorem \ref{partical ans}.  
 	
 	\begin{example} \label{example 2} Let $(R,\L)$ be a GL hypersurface of type $(2,2,3,3)$, and $\X$ be the corresponding GL projective space. Keeping the above notations, we let $$M=(\bigoplus_{\vell\in [\s,\s+\vdelta)} U^{\vell} ) \oplus  (U^{\s+\vdelta}(-\w) ) \oplus(\bigoplus_{\x\in \mathcal{S}}R(\x)),$$
 		where $\vdelta=\x_3+\x_4$. 
 		Combining Theorem \ref{main theorem_1} and Proposition \ref{pro2}, $M$ is both a $2$-tilting bundle on $\X$ and a $2$-tilting object in $\underline{\CM}^{\L} R$. Take a decomposition $M=P\oplus W$, where $P=\bigoplus_{\x\in \mathcal{S}}R(\x)$. Now we claim that $\underline{\End}^{\L}_R(W)\neq \End^{\L}_R(W)$.  Notice that there exists a non-zero morphism $U^{\s+\x_3} \to R(\x_3)$. Moreover, there exists a sequence $R(\x_3) \to R(\x_3+\x_4) \to U^{\s+\vdelta}(-\w)$ of non-zero morphisms and thus the composition is injective by \cite[Proposition 3.28(c)]{HIMO}. Then the composition $$U^{\s+\x_3} \to R(\x_3) \to R(\x_3+\x_4) \to U^{\s+\vdelta}(-\w)$$
 		is also nonzero. But we have $\underline{\Hom}(U^{\s+\x_3},U^{\s+\vdelta}(-\w))=0$ by the diagram (a) in Proposition \ref{pro2}. Therefore we obtain the claim.  
 	\end{example}

 	\noindent {\bf Acknowledgements.} The authors would like to thank Shiquan Ruan for helpful discussions. Jianmin Chen and Weikang Weng were partially supported by the National Natural Science Foundation of China (Nos. 12371040 and 12131018).

			\vskip 5pt
	\noindent {\scriptsize   \noindent Jianmin Chen and Weikang Weng\\
		School of Mathematical Sciences, \\
		Xiamen University, Xiamen, 361005, Fujian, PR China.\\
		E-mails: chenjianmin@xmu.edu.cn, wkweng@stu.xmu.edu.cn\\ }
	\vskip 3pt
	
\end{document}